\providecommand{\keywords}[1]{\textbf{Keywords:} #1}
\newtheorem{lemma}{Lemma}[section]
\newtheorem{proposition}{Proposition}[section]
\newtheorem{remark}{Remark}[section]
\newtheorem{example}{Example}[section]
\title{Kaminsky Type Functional Equations and Bivariate Residual Lifetimes Distributions}
\author{Sabrina Mulinacci, Massimo Ricci\\Department of Statistical Sciences \\ University of Bologna}
\begin{document}

\maketitle
\begin{abstract}
This paper considers generalizations of the functional equations that characterize the lack-of-memory properties at univariate and bivariate levels. Specifically, we extend the univariate functional equation introduced by Kaminsky (1983) (that characterizes the Gompertz distribution) and the corresponding bivariate strong and weak versions later studied in Marshall and Olkin (2015) by allowing the conditional survival distribution to be a fully general time dependent distortion of the unconditional one: in particular, we show that the solutions of these generalized functional equations coincide with the solutions of the functional equations studied in Ricci (2024). Since the univariate functional equation leads only to a trivial case and the solutions of the strong bivariate functional equation have been already studied in the literature, the analysis is focused on the weak bivariate case, where joint residual lifetimes are conditioned on survival beyond a common threshold $t$. In view of potential applications to insurance risk analysis, the impact of the time dependent distortion on the aging properties of the associated distribution is analized as well as the time dependent dependence structure of the residual lifetimes through time-varying versions of the Kendall’s function and of the tail dependence coefficients. Many examples are provided and a wide family of bivariate survival distributions satisfying the generalized weak functional equation is constructed through a mixing approach.  
\end{abstract}
\keywords{Lack-of-Memory Properties; Functional Equations; Residual Lifetimes; Aging Properties; Dependence Structure}
\section{Introduction}
In Marshall and Olkin (1967), the authors introduce bivariate extensions of the functional equation that characterizes the lack-of-memory property for random variables, that is
\begin{equation}
\Bar{G}(x+t) = \Bar{G}(x)\cdot \Bar{G}(t) , \quad x,t\geq 0,
\label{Univariate LMP}
\end{equation}
where $\bar G$ is the survival distribution function of a non-negative random variable. They consider the bidimensional functional equations
\begin{equation}
    \bar{G}(s_1+t_1,s_2+t_2) = \bar{G}(s_1,s_2)\cdot \Bar{G}(t_1,t_2) , \quad s_1, s_2, t_1, t_2 \geq 0,
    \label{strong standard equation}
\end{equation}
and
\begin{equation}
 \bar{G}(s_1+t,s_2+t) = \bar{G}(s_1,s_2)\cdot \Bar{G}(t,t), \quad s_1, s_2, t \geq 0,
    \label{weak standard equation}
\end{equation}
where $\bar G$ is the survival distribution function of a continuous and non-negative bidimensional random vector: in the first case, $\bar G$ is said to satisfy the {\it strong bivariate lack-of-memory property}, while in the second one, $\bar G$ is said to satisfy the {\it weak lack-of-memory property}.

\noindent The unique solution of (\ref{strong standard equation}) is given by the family 
\begin{equation}\label{sol_LP_strong}\Bar G(x,y)=e^{-\lambda_1x-\lambda_2 y},\quad x,y\geq0,\end{equation}
with $\lambda_1,\lambda_2>0$, while that of (\ref{weak standard equation}) 
is given by the family 
\begin{equation}
    \begin{split}
     \Bar{G}(x,y)    & = \begin{cases}
        e^{-\lambda y} \Bar{G}_1(x-y) \ x \geq y\geq 0 \\
        e^{-\lambda x} \Bar{G}_2(y-x) \ 0\leq x < y
    \end{cases},
    \end{split}
    \label{PWBLMP_eq}
\end{equation}
where $\Bar{G}_i, \ i = 1,2$ are marginal survival functions and $\lambda >0$. Indeed, (\ref{PWBLMP_eq}) is a bivariate survival distribution function with marginal absolutely continuous distributions with densities $g_i(z)=-\Bar G^\prime_i(z)$, $i=1,2$, if and only if $0<\lambda\leq g_1(0)+g_2(0)$ and $\frac d{dz}\log g_i(z)\geq -\lambda$, for all $z\geq 0$ and $i=1,2$. Moreover, if $(X,Y)$ is distributed according to (\ref{PWBLMP_eq}), then $\mathbb P\left (X=Y\right)=\frac{g_1(0)+g_2(0)}\lambda-1$.
\vskip 0.3cm

A particular generalization of the functional equation (\ref{Univariate LMP}) has been considered in Kaminsky (1983):
\begin{equation}
    \frac{\Bar{F}(x+t) }{\Bar{F}(t)}= (\Bar{F}(x))^{\phi(t)}, \quad x,t \geq 0,
    \label{Kamisnky Functional}
\end{equation}
with $\phi: [0,\infty) \rightarrow [0,\infty)$. This functional equation is satisfied by a proper survival distribution function if and only if it coincides with the Gompertz distribution and $\phi(t)=e^{\lambda t}$ for $\lambda>0$. 

In Marshall and Olkin (2015), two bivariate versions of (\ref{Kamisnky Functional}) have been considered. More precisely, they analyze the {\it strong bivariate functional equation} 
\begin{equation}
   \frac{ \Bar{F}(x+s,y+t)}{\Bar{F}(s,t)} = (\Bar{F}(x,y))^{\phi(s,t)}, \quad x,y,s,t \geq 0,
    \label{strong functional}
\end{equation}
with $\phi:[0,\infty) \times [0,\infty) \rightarrow [0,\infty)$ and the {weak bivariate functional equation}
\begin{equation}
   \frac{ \Bar{F}(x+t,y+t)}{\Bar{F}(t,t)} = (\Bar{F}(x,y))^{\phi(t)}, \quad x,y,t \geq 0,
    \label{weak_functional}
\end{equation}
with $\phi: [0,\infty) \rightarrow [0,\infty)$. The functional equations (\ref{Kamisnky Functional}), (\ref{strong functional}) and (\ref{weak_functional}) generalize the corresponding lack-of-memory properties (\ref{Univariate LMP}), (\ref{strong standard equation}) and (\ref{weak standard equation}) that can be recovered by setting $\phi(t)=1$ and $\phi(s,t)=1$, depending on the case.

In Marshall and Olkin (2015) the authors show that, under the assumption that marginal distributions are of Gompertz type, 
the solution of (\ref{strong functional}) is given by 
\begin{equation*}
    \bar F(x,y) = e^{-\xi(e^{\lambda_1 x + \lambda_2 y}-1)}
\end{equation*}
for some $\lambda_1, \lambda_2 > 0$ and for some $\xi \geq 1$ (see also Kolev, 2016, for alternative characterizations of these distributions), while the solution of (\ref{weak_functional}) is given by
\begin{equation}\label{MO2015}
    \bar{F}(x,y) = 
    \begin{cases}
       e^{-\xi (e^{\lambda y}-1)-e^{\lambda y} \xi_1 (e^{\lambda_1 (x-y)}-1)}, \ x \geq y \\
       e^{-\xi (e^{\lambda x}-1)-e^{\lambda x} \xi_2 (e^{\lambda_2 (y-x)}-1)}, \ x < y \\
    \end{cases}
\end{equation}
for some $\lambda,\lambda_1,\lambda_2,\xi,\xi_1,\xi_2\geq 0$, with $\lambda \geq \max(\lambda_1, \lambda_2)$, $\lambda (\xi-1) \geq \max (\lambda_1(\xi_1-1), \lambda_2(\xi_2-1))$ and 
$\lambda_1 \xi_1 + \lambda_2 \xi_2 \geq \lambda \xi$.\\

In existing literature, further extensions of the bivariate lack-of-memory properties have been considered and studied, introducing associative operators that generalize the standard sum operator. 
In Muliere and Scarsini (1986), 
the functional equations defining the lack-of-memory properties (at univariate as well at bivariate level) have been generalized by replacing the standard sum with a more general associative operator (see also Rao, 2004, for an alternative type of generalization in the same line of approach).
In Ricci (2024), a generalization of (\ref{Univariate LMP}), (\ref{strong standard equation}) and (\ref{weak standard equation}) is obtained by substituting the standard product with
the binary associative operator $\otimes_h$ in $[0,1]$, defined as
\begin{equation*}
a \otimes_h b = h(h^{-1}(a) \ h^{-1} (b)),\quad a,b\in[0,1],
\label{pseudo product}
\end{equation*}
where $h$ a strictly increasing bijection of $[0,1]$; the obtained functional equations are called {\it pseudo univariate}, {\it pseudo strong bivariate} and {\it pseudo weak bivariate} lack-of-memory functional equations, respectively.
\vskip 0.5cm
In this paper, following the same line of approach of Kaminsky (1983) in the one dimensional case and that of Marshall and Olkin (2015) in the bidimensional one, we consider the corresponding fully general functional equations
$$ \frac{\Bar{F}(x+t) }{\Bar{F}(t)}= d_t\left (\Bar{F}(x)\right), \quad x,t \geq 0,$$
$$\frac{ \Bar{F}(x+s,y+t)}{\Bar{F}(s,t)} = d_{s,t}\left(\Bar{F}(x,y)\right), \quad x,y,s,t \geq 0,$$
and
$$\frac{ \Bar{F}(x+t,y+t)}{\Bar{F}(t,t)} = d_t\left(\Bar{F}(x,y)\right), \quad x,y,t \geq 0,$$
where, for every $t\geq 0$, $d_t$ is a strictly increasing bijection of $[0,1]$ with $d_0(x)=x$ and, for all $s,t\geq 0$, $d_{s,t}$ is a strictly increasing bijection of $[0,1]$ with $d_{0,0}(x)=x$.
\vskip 0.3cm

As a main result, we will prove that these functional equations are equivalent to the {\it pseudo univariate}, to the {\it strong bivariate} and to the {\it pseudo weak bivariate} lack-of-memory functional equations of Ricci (2024), respectively, for suitable choices of $h$, $d_t$ and $d_{s,t}$. Since the one dimensional case results in a trivial solution and the strong one is solved by distributions already exhaustively studied in existing literature, we will focus our analysis on the weak case. Since this case represents a generalization of the weak bivariate lack-of-memory property that depends on the choice of $d_t$, we will analyze the dynamics of the dependence structure of the vector  $\mathbf{X}_t = (X-t,Y-t|X > t, Y > t)$ through the study of the time dependent Kendall's function and the tail dependence coefficients. Moreover, having in mind actuarial and reliability applications, we will analyze the bivariate aging properties induced again by $d_t$.

The paper is organized as follows. In Section \ref{prerequisites} we introduce the main concepts and results that are the base of the contents of the paper. In Section \ref{Kaminsky} we analyze fully general extensions of the Kaminsky functional equation, in the unidimensional case and of the Marshall-Olkin functional equations in the bidimensional case. 
Section \ref{Aging and generators} is devoted to the analysis of the aging properties induced by the considered functional equations while in Section \ref{Dependence} the time dependent dependence structure of the residual lifetimes is studied. Section \ref{conclusion} concludes.

\section{Preliminaries and notation}\label{prerequisites}
In this section we will fix the notation and introduce the main concepts that will be used in the paper.
\smallskip

Let $h:[0,1]\to[0,1]$ be a continuous and strictly increasing bijection. We call the binary operator $\otimes_h:[0,1]\times[0,1]\to[0,1]$, defined as
$$a\otimes_hb = h\left (h^{-1}(a)h^{-1}(b) \right ),\quad a,b\in[0,1],$$ 
 {\it pseudo product with generator $h$}. 

This operator generalizes the standard product (recovered when $h(x)=x$ for $x\in [0,1]$) and it is also known in the literature as "Archimedean T-norm": it is continuous, commutative, associative and strictly increasing in both arguments, so that $a\otimes_h1=a$ for all $a\in [0,1]$ and $a\otimes_ha < a$ for all $a \in (0,1)$ (see, for example, among the wide literature, Klement et al., 2004).
\smallskip

\noindent Notice that generators $h(x)$ and $h_\beta(x)=h(x^\beta), \beta > 0$ produce the same pseudo-product.
\vskip 0.3cm
In Ricci (2024), the pseudo-product is used to generalize the functional equations defining the different types of lack-of-memory properties, (\ref{Univariate LMP}), (\ref{strong standard equation}) and (\ref{weak standard equation}), by substituting the standard product with the pseudo-product, obtaining, respectively,
\begin{equation}\label{Univ_pseuso}
    \Bar{F}(x+t) = \Bar{F}(x) \otimes_h \Bar{F}(t),\quad\forall x,t\geq 0,
\end{equation}
\begin{equation}
    \Bar{F}(s_1+t_1, s_2 + t_2) = \Bar{F}(s_1, s_2) \otimes_h \Bar{F}(t_1,t_2),\quad \forall s_1,s_2,t_1,t_2 \geq 0
    \label{Strong Pseudo Equation}
\end{equation}
and 
\begin{equation}
    \Bar{F}(s_1+t, s_2 + t) = \Bar{F}(s_1, s_2) \otimes_h \Bar{F}(t,t), \quad\forall s_1,s_2,t \geq 0.
    \label{Weak Pseudo Equation}
\end{equation}
Moreover, in that paper, it is shown that the class of all the solutions of these functional equations are obtained by distorting through the generator $h$ the solutions of the corresponding standard lack-of-memory functional equations.
More precisely, the class of solution of (\ref{Univ_pseuso}) is given by 
\begin{equation*}
\Bar{F}(x) = h(e^{-\lambda x}), \ \lambda > 0,
\end{equation*}
that of  (\ref{Strong Pseudo Equation}) is of type (see (\ref{sol_LP_strong}))
\begin{equation}
         \Bar{F}(s,t) = h(\exp(-\lambda_1 s -\lambda_2 t)),\ \lambda_i \geq 0, \ i = 1,2,
         \label{PBLMP_eq}
\end{equation}
while the solution of (\ref{Weak Pseudo Equation}) is of type (see (\ref{PWBLMP_eq}))
\begin{equation}
     \Bar{F}(x,y)  = h(\Bar{G}(x,y)) =  \begin{cases}
        h\left(e^{-\lambda y} \Bar{G}_1(x-y)\right) \ x \geq y\geq 0 \\
        h\left(e^{-\lambda x} \Bar{G}_2(y-x)\right) \ 0\leq x < y
    \end{cases},
    \label{PWBLMPh_eq}
\end{equation}
where $\Bar{G}_i, \ i = 1,2$ are univariate marginal survival functions and $\lambda$ is a positive constant. 
\vskip 0.3cm
While (\ref{PBLMP_eq}) is proved to be a bivariate survival distribution function if and only if the function $h(e^{-x})$ is convex, determining conditions under which 
(\ref{PWBLMPh_eq}) is a bivariate survival function is not an easy task and the problem is widely discussed in Ricci (2024).
Moreover, it is shown that, if $h$ is twice differentiable with $h^\prime(x)>0$, for all $x\in [0,1]$, survival bivariate distributions of type (\ref{PWBLMPh_eq}) inherit from $\bar G$ satisfying (\ref{PWBLMP_eq}) the singular component, that is, if $(X,Y)$ is distributed according to (\ref{PWBLMPh_eq}),
\begin{equation}\label{sing}
\mathbb P(X=Y)=\frac{g_1 (0)+g_2 (0)}\lambda -1,\end{equation}
where $g_i^\prime=-\bar G^\prime _i$ for $i=1,2$: notice that, as a consequence, again, 
$\lambda\in(0,g_1^\prime (0)+g_2^\prime (0)]$. However,
 although $\mathbb P(X=Y)$ doesn't depend on the distortion $h$, the choice of $h$ determines how the singularity mass is spread on the straight line $x=y$. In fact, if $S(x)=\mathbb P(X=Y>x)$,
$S(x) =S(0)\cdot h\left (e^{-\lambda x}\right)$.
\smallskip

On the other side, survival distribution functions in (\ref{PBLMP_eq}) represent a type of generalization of the bivariate Schur-constant distributions that have been already studied in Genest and Kolev (2021): this is the reason why, in Ricci (2024), the analysis is focused on the weak case (\ref{PWBLMPh_eq}).
\smallskip

Obviously, the generators $h$ and $h_\beta$, given by $h_\beta(x)=h\left (x^\beta\right)$, define the same class of bivariate survival functions satifying (\ref{Univ_pseuso}), (\ref{Strong Pseudo Equation}) and (\ref{Weak Pseudo Equation}), for all $\beta>0$.

\vskip 0.3cm
\noindent In the whole paper we will consider continuous positive random variables with support $(0,+\infty)$. Moreover, given a positive random variable $X$ and a threshold $t>0$, we denote by $X_t$ the random variable 
\begin{equation}\label{survival_one}X_t=[X-t\vert X>t],\end{equation}
representing the excess of $X$ above $t$, called "residual lifetime".
Similarly, given the vector ${\bf X}=(X,Y)$ and given $s,t>0$, we denote by ${\bf X}_{s,t}$ the random vector  
\begin{equation}\label{survival_two_strong}{\bf X}_{s,t}=[X-s,Y-t\vert X>s,Y>t],\end{equation}
while, when $t=s$, we use the simplified notation
\begin{equation}\label{survival_two_weak}{\bf X}_{t}=[X-t,Y-t\vert X>t,Y>t].\end{equation}

\section{Generalized Kaminsky-type functional equations}\label{Kaminsky}
This Section is devoted to the main results of the paper that consist in determining the solutions of fully general extensions of Kaminsky and Marshall-Olkin functional equations. 
\subsection{The generalized Kaminsky functional equation}\label{Kaminsky1}
Here we start considering the generalization of the univariate Kaminsky functional equation (\ref{Kamisnky Functional}).

\noindent Let $X$ be distributed according to the survival distribution function $\bar F$. We analyze the functional equation (see (\ref{survival_one}))
\begin{equation}\label{Kaminsky_one}\bar F_{X_t}(x)=\frac{\bar F(x+t)}{\bar F(t)}=d_t\left (\bar F(x)\right ),\quad t,x \geq 0,\end{equation}
where, for every $t\geq 0$, $d_t$ is a strictly increasing bijection of $[0,1]$ with $d_0(x)=x$. (\ref{Kaminsky_one}) represents a generalization of Kaminsky's equation (\ref{Kamisnky Functional}) since the latter corresponds to the choice $d_t(x)=x^{\phi(t)}$. Obviously, the case $d_t(x)=x$ for all $t\geq 0$ identifies the standard lack-of-memory property case.
\vskip 0.3cm
\noindent From (\ref{Kaminsky_one}), we immediately get that $d_t$ is uniquely identified by
\begin{equation}\label{d_tuni}d_t(x)=\frac{\bar F\left(t+\bar F^{-1}(x)\right)}{\bar F(t)}.\end{equation}
\begin{remark}\label{re-Univ}
In Ricci (2024), it is proved that any univariate survival function $\Bar{F}$ satisfies (\ref{Univ_pseuso}) with $h(x) = \bar{F}(-\log x)$: notice that 
$d_t(x)=\frac{h\left (e^{-t}h^{-1}(x)\right )}{h\left (e^{-t}\right )}$.
\end{remark}

\subsection{The generalized strong bivariate Marshall-Olkin functional equation}\label{MO-S}
Let $(X,Y)$ be distributed according to the joint survival distribution function $\bar F$ and denote by $\bar F_{s,t}$ the joint survival distribution function of ${\bf X}_{s,t}$ (see (\ref{survival_two_strong}).
The Marshall and Olkin (2015) functional equation (\ref{strong functional}) can be generalized to 
\begin{equation}\label{MO_strong}\bar F_{s,t}(x,y)=\frac{\bar F(x+s,y+t)}{\bar F(s,t)}=d_{s,t}\left (\bar F(x,y)\right ),\quad t,x,s,y \geq 0,\end{equation}
where, for all $s,t\geq 0$, $d_{s,t}$ is a strictly increasing bijection of $[0,1]$ with $d_{0,0}(x)=x$.
\smallskip

Following the same reasoning as in the proof of Proposition 3.1 in Marshall and Olkin (2015), we get the following result:
\begin{proposition}\label{strong_strong}
A bivariate survival function $\bar F$ satisfies (\ref{MO_strong}) if and only if  there exist an univariate convex survival distribution function $\bar H$ and a constant $a > 0$ such that 
\begin{equation}\label{strong_sol}\bar{F}(x,y) = \bar H(x+ay)\end{equation}
and \begin{equation}\label{time_strong}d_{s,t}(x)=
\frac{\bar H\left (s+ta+\bar H^{-1}(x)\right )}{\bar H\left (s+ta\right )}.\end{equation}
\end{proposition}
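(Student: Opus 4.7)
My plan is to prove both implications of the equivalence. For the \emph{if} direction, I would substitute $\bar F(x,y)=\bar H(x+ay)$ into the left-hand side of (\ref{MO_strong}); since a convex survival function on $[0,\infty)$ with $\bar H(0)=1$ and $\bar H(\infty)=0$ is strictly decreasing on its support, the inverse $\bar H^{-1}$ is well defined there, and a direct computation gives
\[
\frac{\bar F(x+s,y+t)}{\bar F(s,t)}=\frac{\bar H(x+ay+s+at)}{\bar H(s+at)}=\frac{\bar H\bigl(s+at+\bar H^{-1}(\bar F(x,y))\bigr)}{\bar H(s+at)},
\]
which is exactly $d_{s,t}(\bar F(x,y))$ with $d_{s,t}$ as in (\ref{time_strong}).

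For the \emph{only if} direction I would follow the Marshall--Olkin (2015) recipe. Assuming sufficient smoothness of $\bar F$, differentiate both sides of (\ref{MO_strong}) with respect to $x$ and to $y$ separately; taking the ratio eliminates the unknown factor $d_{s,t}'(\bar F(x,y))$ and leaves
\[
\frac{\bar F_x(x+s,y+t)}{\bar F_y(x+s,y+t)}=\frac{\bar F_x(x,y)}{\bar F_y(x,y)}\qquad\forall\,s,t,x,y\ge 0.
\]
Hence the ratio $\bar F_x/\bar F_y$ is invariant under every forward translation, so it is constant; since both partial derivatives are negative, the constant is a positive number $1/a$ with $a>0$. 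The resulting first-order PDE $\bar F_y=a\bar F_x$ has general solution $\bar F(x,y)=\bar H(x+ay)$, and setting $y=0$ identifies $\bar H(x)=\bar F(x,0)$ as the marginal survival function of $X$. Convexity of $\bar H$ then follows from the requirement that the joint density $f(x,y)=\partial_x\partial_y\bar F=a\bar H''(x+ay)$ be non-negative, while (\ref{time_strong}) is read off from $\bar F(x+s,y+t)=\bar H(x+ay+s+at)$ by inverting $\bar H$ as in the \emph{if} direction.

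The main obstacle is the tacit smoothness assumption used in the differentiation step, since the hypotheses only give continuity of $\bar F$. To dispose of this I would recast the derivative argument as a level-set argument: (\ref{MO_strong}) forces the implication $\bar F(x_1,y_1)=\bar F(x_2,y_2)\Rightarrow\bar F(x_1+s,y_1+t)=\bar F(x_2+s,y_2+t)$ for all $s,t\ge 0$, so each level curve of $\bar F$ is carried onto another level curve by every $(s,t)\in[0,\infty)^2$. The only continuous foliations of the positive quadrant that are invariant under the whole forward translation semigroup are those by parallel straight lines, which gives the desired form $\bar F(x,y)=\bar H(x+ay)$ with $a>0$ without any a priori differentiability; the rest of the argument then proceeds as above.
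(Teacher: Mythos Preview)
Your \emph{if} direction coincides with the paper's. Your \emph{only if} direction takes a genuinely different route. The paper never differentiates or considers level sets; instead it specializes (\ref{MO_strong}) to the axes: putting $y=t=0$ and $x=s=0$ identifies $d_{s,0}$ and $d_{0,t}$ via the marginals $\bar F_1,\bar F_2$ (as in (\ref{d_tuni})), while putting $x=t=0$ and $y=s=0$ yields two expressions for $\bar F(s,t)$ which, once equated and simplified, give the Cauchy equation $\Phi(r+t)=\Phi(r)+\Phi(t)$ for $\Phi=\bar F_1^{-1}\circ\bar F_2$; monotonicity then forces $\Phi(z)=az$, hence $\bar F(x,y)=\bar F_1(x+ay)$. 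This argument needs only continuity and strict monotonicity of the marginals, exactly the standing hypotheses.

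Your PDE approach is cleaner when smoothness is available, and the level-set idea is the right way to remove that assumption; but the sentence ``the only continuous foliations of the positive quadrant that are invariant under the whole forward translation semigroup are those by parallel straight lines'' is doing all the work and is asserted, not proved. Making it rigorous requires showing that any level curve $y=\phi(x)$ satisfies $\phi(x-s)-\phi(x)=t(s)$ for a one-parameter family of pairs $(s,t(s))$ (coming from the fact that a pure $(s,0)$-shift and a pure $(0,t)$-shift land on the same level), and then invoking a Cauchy-type argument to conclude $\phi$ is affine---essentially the same reduction the paper performs directly on the marginals, with the added nuisance of handling domains that are truncated by the boundary of the quadrant. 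So your fix is correct in spirit but not yet a proof. A smaller point: your convexity argument for $\bar H$ presupposes a density; without smoothness, convexity should be deduced from the rectangle inequality for $\bar F$, which becomes a non-negative second-difference condition on $\bar H$.
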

\begin{proof}
From (\ref{strong_sol}) and (\ref{time_strong}), (\ref{MO_strong}) immediately follows.
\vskip 0.2cm
Conversely, let us assume that (\ref{MO_strong}) holds true for a bivariate survival function $\bar F$ and denote with $\bar F_1$ and $\bar F_2$ the two associated marginal survival distribution functions.
From (\ref{MO_strong}), setting $y=t=0$, we get 
$$\frac{\bar F_1(x+s)}{\bar F_1(s)}=d_{s,0}\left (\bar F_1(x)\right ),\quad x,s\geq 0,$$
while, setting $x=s=0$, we get
$$\frac{\bar F_2(y+t)}{\bar F_2(t)}=d_{0,t}\left (\bar F_2(y)\right ),\quad y,t\geq 0$$
and (see Subsection \ref{Kaminsky1}) 
\begin{equation}\label{two_dist}d_{s,0}(z)=\frac{\bar F_1\left(s+\bar F_1^{-1}(z)\right)}{\bar F_1(s)}\quad\text{ and }\quad d_{0,t}(z)=\frac{\bar F_2\left(t+\bar F_2^{-1}(z)\right)}{\bar F_2(t)}.\end{equation}
Now, again from (\ref{MO_strong}), setting $x=t=0$, we get 
\begin{equation}\label{e1}\bar F(s,y)=d_{s,0}\left (\bar F_2(y)\right )\bar F_1(s),\quad s,y\geq 0,\end{equation}
while, setting $y=s=0$, we obtain
\begin{equation}\label{e2}\bar F(x,t)=d_{0,t}\left (\bar F_1(x)\right )\bar F_2(t),\quad x,t\geq 0.\end{equation}
Since the two expressions in (\ref{e1}) and (\ref{e2}) must coincide, we get
$$d_{s,0}\left (\bar F_2(t)\right )\bar F_1(s)=d_{0,t}\left (\bar F_1(s)\right )\bar F_2(t),\quad s,t\geq 0$$
that, by (\ref{two_dist}), gives
$$\bar F_1\left (s+\bar F_1^{-1}\left (\bar F_2(t)\right )\right )= \bar F_2\left (t+\bar F_2^{-1}\left (\bar F_1(s)\right )\right ).$$
Setting now $r=\bar F_2^{-1}\left (\bar F_1(s)\right )$, we obtain
$$\bar F_1^{-1}\left (\bar F_2(t+r)\right)=\bar F_1^{-1}\left (\bar F_2(r)\right )+\bar F_1^{-1}\left (\bar F_2(t)\right),\quad r,t\geq 0,$$
from which
$$\bar F_1^{-1}\left (\bar F_2(z)\right)=az,\quad z\geq 0,$$
for some $a>0$.
By substituting in (\ref{e1}), we get
\begin{equation}\label{sol_strong}\bar F(s,y)=\bar F_1\left (s+ay\right),\quad s,y\geq 0\end{equation}
and from (\ref{MO_strong}), we recover that $d_{s,t}(x)=\frac{\bar F_1\left (s+at+\bar F_1^{-1}(x)\right )}{\bar F_1\left (s+at\right )}$. Moreover, since $\bar F$ is a bivariate survival distribution function, from (\ref{sol_strong}), $\bar F_1$ is necessarily convex.  \end{proof}
Notice that from (\ref{time_strong}) the couples $(s_1,t_1)$ and $(s_2,t_2)$ for which $s_1+at_1=s_2+at_2$ are associated to the same distortion.
\
\begin{remark}\label{rem_strong}
The class of functions satisfying (\ref{MO_strong}) coincides with the class of functions satisfying the functional equation (\ref{Strong Pseudo Equation}) for a suitable generator.
In fact, setting $h(x)=\bar H\left (-\ln x)\right )$ in 
(\ref{strong_sol}), as shown in Ricci (2024), we get that  
\begin{equation*}\label{eq_strong}
\bar F(x+s,y+t)=h\left (e^{-x-s-ay-at}\right)=\bar F(s,t)\otimes_h\bar F(x,y)
\end{equation*}
meaning that (\ref{MO_strong}) is equivalent to (\ref{Strong Pseudo Equation}). As mentioned in Section \ref{prerequisites}, survival distributions of this type have already been extensively studied in literature and we will not further analyze them.

\end{remark}

\subsection{The generalized weak bivariate Marshall-Olkin functional equation}\label{MO-W}
Let $(X,Y)$ be distributed according to the joint survival distribution function $\bar F$ and let $\bar F_t$ denote the joint survival distribution function of ${\bf X}_t$. The generalized version of the Marshall and Olkin (2015) functional equation (\ref{weak_functional}) is 
\begin{equation}\label{MO_weak}\bar F_t(x,y)=\frac{\bar F(x+t,y+t)}{\bar F(t,t)}=d_{t}\left (\bar F(x,y)\right ),\quad t,x,y \geq 0,\end{equation}
where, for all $t\geq 0$, $d_{t}$ is a strictly increasing bijection of $[0,1]$ with $d_{0}(x)=x$.
\vskip 0.3cm
The proof of the following Proposition follows the same ideas contained in the proof of Proposition 4.1 in Marshall and Olkin (2015).
\begin{proposition}\label{Prop_weak}
A bivariate survival function $\bar F$ satisfies (\ref{MO_weak}) if and only if there exists a strictly increasing bijection $h$ of $[0,1]$ for which
\begin{equation}\label{pwlmp}\bar F(x+t,y+t)=\bar F(x,y)\otimes_h\bar F(t,t).\end{equation}
In this case
\begin{equation}\label{d}d_t(x)=\frac{h\left (e^{-t}h^{-1}(x)\right )}{h\left (e^{- t}\right )}=\frac{\bar H\left(t+\bar H^{-1}(x)\right )}{\bar H(t)},\end{equation}
where $\bar H(x)=\bar F(x,x)=h\left(e^{-x}\right)$.
\end{proposition}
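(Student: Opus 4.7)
The plan is to reduce both directions to the univariate analysis of Subsection~\ref{Kaminsky1}, by restricting the bivariate equations to the diagonal $x=y$. Set $\bar H(s):=\bar F(s,s)$; since $(X,Y)$ is a continuous positive vector with full support, $\bar H$ is continuous and strictly decreasing from $1$ down to $0$, so the natural candidate generator $h(x):=\bar H(-\log x)$ (extended by $h(0)=0$) is automatically a strictly increasing bijection of $[0,1]$.

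For the forward implication, I would set $x=y=s$ in \eqref{MO_weak} to get
$$\bar H(s+t)/\bar H(t)=d_t(\bar H(s)),\qquad s,t\geq 0,$$
which is exactly the univariate Kaminsky-type equation \eqref{Kaminsky_one} for $\bar H$. By Remark~\ref{re-Univ} applied to $\bar H$ together with \eqref{d_tuni}, this both fixes $\bar H(s)=h(e^{-s})$ and forces
$$d_t(u)=\frac{\bar H\bigl(t+\bar H^{-1}(u)\bigr)}{\bar H(t)}=\frac{h\bigl(e^{-t}h^{-1}(u)\bigr)}{h(e^{-t})}, \qquad u\in(0,1].$$
Since the range of $\bar H$ is all of $(0,1]$, $d_t$ is completely pinned down by its values on the diagonal—no extension issue arises. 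Plugging this expression back into \eqref{MO_weak} gives $\bar F(x+t,y+t)=h\bigl(e^{-t}\,h^{-1}(\bar F(x,y))\bigr)$, which, using $h^{-1}(\bar F(t,t))=e^{-t}$, can be rewritten as $\bar F(x,y)\otimes_h \bar F(t,t)$. This is \eqref{pwlmp}, and the displayed formula is exactly \eqref{d}.

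For the converse, assume \eqref{pwlmp} holds for some strictly increasing bijection $\tilde h$ of $[0,1]$. Restricting once again to $x=y=s$ turns \eqref{pwlmp} into the univariate pseudo lack-of-memory equation \eqref{Univ_pseuso} for $\bar H$, whose solutions recalled in Section~\ref{prerequisites} are $\bar H(s)=\tilde h(e^{-\lambda s})$ for some $\lambda>0$. Exploiting the remark that $\tilde h$ and $\tilde h_\lambda(x):=\tilde h(x^\lambda)$ define the same pseudo-product, I relabel $h:=\tilde h_\lambda$, so that $\bar H(s)=h(e^{-s})$ while \eqref{pwlmp} continues to hold with this new $h$. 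Dividing both sides of \eqref{pwlmp} by $\bar F(t,t)=h(e^{-t})$ then yields \eqref{MO_weak} with $d_t$ as in \eqref{d}, and this $d_t$ is visibly a strictly increasing bijection of $[0,1]$ with $d_0=\mathrm{id}$.

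The only subtle point is conceptual rather than computational: in the forward direction one must be convinced that the generator $h$ built solely from the diagonal values $\bar H$ of $\bar F$ captures the full pseudo-product structure of $\bar F$ off the diagonal. This is automatic because \eqref{MO_weak}, holding for all $(x,y)$, together with the fact that the diagonal alone already determines $d_t$ on the whole range $(0,1]$, leaves no residual freedom in $\bar F(x+t,y+t)$; the reparametrization trick $h\mapsto h_\lambda$ in the converse is the only other delicate ingredient.
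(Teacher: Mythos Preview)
Your proof is correct and follows essentially the same route as the paper's own argument: in both directions you restrict to the diagonal $x=y$ to identify $d_t$ (respectively $\lambda$) from the univariate equation for $\bar H$, then substitute back. The only cosmetic difference is that in the converse the paper invokes the explicit form \eqref{PWBLMPh_eq} of the solutions to \eqref{pwlmp} to read off $\bar F(t,t)=h(e^{-\lambda t})$, whereas you obtain the same conclusion by restricting \eqref{pwlmp} to the diagonal and citing the univariate solution of \eqref{Univ_pseuso}; the subsequent rescaling $h\mapsto h_\lambda$ is identical.
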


\begin{proof}
If $\bar F$ satisfies (\ref{pwlmp}) with respect to a given generator $h$, then it can be easily verified that (\ref{MO_weak}) holds true with 
$d_t(x)=\frac{h\left (h^{-1}\left (\bar F(t,t)\right )h^{-1}(x)\right )}{\bar F(t,t)}$.
Moreover, since the solutions of (\ref{pwlmp}) are given in (\ref{PWBLMPh_eq}), $\bar F(t,t)=h\left (e^{-\lambda t}\right)$. Taking into account that the generator $h$ is defined up to the composition with a power function (see Section \ref{prerequisites}),
we can choose $h$ such that $\bar F(x,x)=h\left(e^{-x}\right)$ and (\ref{MO_weak}) holds true with $d_t$ given by (\ref{d}).
\vskip 0.1 cm

\noindent Let us now assume that (\ref{MO_weak}) holds true for a bivariate survival functiona $\bar F$. If $(X,Y)$ is a random vector with joint survival distribution $\bar F$ and $\bar H$ is the survival distribution of $W=\min(X,Y)$,
substituting $y=x$ in (\ref{MO_weak}), we get
$$\frac{\bar F(x+t,x+t)}{\bar F(t,t)}=d_{t}\left (\bar F(x,x)\right ),\quad t,x \geq 0,$$
that is equivalent to
$$\frac{\bar H(x+t)}{\bar H(t)}=d_{t}\left (\bar H(x)\right ),\quad t,x \geq 0.$$
Then (see Subsection \ref{Kaminsky1}) the distortion $d_t$ is uniquely given by 
$d_t(x)=\frac{\bar H\left (t+\bar H^{-1}(x)\right )}{\bar H(t)}$ and, setting $h(x)=\bar H(-\ln x)$ and substituting in 
(\ref{MO_weak}), we have that 
$$\bar F(x+t,y+t)=h\left (e^{-t}h^{-1}\left (\bar F(x,y)\right )\right)=h\left (h^{-1}\left (\bar F(t,t)\right)h^{-1}\left (\bar F(x,y)\right )\right),$$
so (\ref{pwlmp}) is satisfied.
\end{proof}

\begin{remark}\label{conditional_PLMP} By Proposition \ref{Prop_weak}, $\bar F$ satisfies (\ref{MO_weak}) if and only if it satisfies the functional equation (\ref{Weak Pseudo Equation}) with respect to the distortion $h$. But since this class coincides with the class of survival distributions obtained by distorting through $h$ a function $\bar G$ satisfying the classical bivariate weak lack-of-memory property functional equation (\ref{weak standard equation}) (see (\ref{PWBLMP_eq})), we have that 
\begin{equation*}\label{MO_weak_distorted}\bar F_t(x,y)=h_{t}\left (\bar G(x,y)\right ),\quad t,x,y \geq 0, \end{equation*}
where 
\begin{equation}\label{t_dep_dist}h_t(x)=\frac{h\left (e^{-t}x\right )}{h\left (e^{- t}\right )}\end{equation} 
is a strictly increasing bijection of the interval $[0,1]$ for every $t\geq 0$.
\end{remark}
\vskip 0.2cm
Notice that, since the pseudo product in (\ref{pwlmp}) is determined by $h$ up to powers of its argument, in the statement of Proposition \ref{Prop_weak}
we have selected a specific $h$ that is understood in the rest of the paper.

\subsubsection{Bivariate distributions satisfying the generalized weak Marshall-Olkin functional equation: mixing approach}\label{mixing_sec}

In this Subsection we will construct a family of distributions satisfying (\ref{MO_weak}) starting from a specific family of distributions satisfying the bivariate weak lack-of-memory property (\ref{weak standard equation}) and  through a mixing approach.
\vskip 0.2cm
\noindent In Theorem 5.1 in Mulinacci (2018) it is shown that the survival distributions family
\begin{equation}\label{MU_LMP}
\begin{aligned}\bar G_{\alpha,\gamma,\alpha_1,\alpha_2}(x,y)&=\left (\alpha_2 e^{\gamma x}+\alpha_1 e^{\gamma y}+\left(1-\alpha_1-\alpha_2\right)e^{\gamma\,\max (x,y)}\right )^{-\frac{1}{\alpha}}=\\
&=\left\{
\begin{array}{cc}
e^{- \frac{\gamma y}{\alpha}}\left (\alpha _1+(1-\alpha_1)e^{\gamma(x-y)}\right )^{-\frac{1}{\alpha}},& x\geq y\geq 0\\
e^{- \frac{\gamma x}{\alpha}}\left (\alpha _2+(1-\alpha_2)e^{\gamma(y-x)}\right )^{-\frac{1}{\alpha}},& 0\leq x< y\\
\end{array}
\right .,\end{aligned}\end{equation}
with $\alpha_1,\alpha_2\in (0,1)$, $\alpha_1+\alpha_2\leq 1$ and $\alpha,\gamma >0$, satisfies the standard weak lack-of-memory property in (\ref{weak standard equation}). 
The marginal distributions are $\bar G_{\alpha,\gamma,\alpha_i,i}(z)=\left (\alpha_i+(1-\alpha_i)e^{\gamma z}\right )^{-\frac{1}{\alpha}}, \ i = 1,2$, with hazard rates $r_{\alpha ,\gamma,\alpha_i,i}(z)= \frac{\gamma (1-\alpha_i)}{\alpha} \frac{e^{\gamma z}}{\alpha_i+(1-\alpha_i)e^{\gamma z}}, \ i = 1,2.$
\vskip 0.2cm
\noindent Let $Z$ be a strictly positive random variable and $(X,Y)$ be a random vector for which
$$\mathbb P\left (X>x,Y>y\vert Z=z\right)=\bar G^{z}_{\alpha,\gamma,\alpha_1,\alpha_2}(x,y);
$$
this means that $Z$ is a common multiplicative stochastic factor affecting $r_{\alpha,i}$, for $i=1,2$ through the parameter $\alpha$.
\vskip 0.2cm
\noindent So the joint survival distribution of $(X,Y)$ is 
\begin{equation}\label{mix_laplace}\begin{aligned}\bar F(x,y)& =\mathbb E\left [\mathbb P\left (X>x,Y>y\vert Z\right)\right ]
=\mathbb E\left [\bar G_{\alpha,\gamma,\alpha_1,\alpha_2}^Z(x,y)\right ]=\\
&=  M_Z\left (\ln \left(\bar G_{\alpha,\gamma,\alpha_1,\alpha_2}(x,y)\right )\right )=h\left(\bar G_{\gamma,\gamma,\alpha_1,\alpha_2}(x,y)\right ),
\end{aligned}\end{equation}
where $M_Z(u)=\mathbb E\left [e^{uZ}\right ]$ is the moment generating function of the random variable $Z$ and $h(z)=M_Z\left (\frac \gamma\alpha\ln z\right )$. Then $\bar F$ satisfies (\ref{MO_weak}) with 
$$d_t(z) = \frac{M_{Z}\left(M^{-1}_Z(z)-\frac \gamma\alpha t\right)}{M_Z\left (-\frac \gamma\alpha t\right )}$$
and 
$$\bar F_t(x,y)=\frac{M_Z\left (-\frac 1\alpha\ln\left (\alpha_2 e^{\gamma x}+\alpha_1 e^{\gamma y}+\left(1-\alpha_1-\alpha_2\right)e^{\gamma \,\max (x,y)}\right )-\frac \gamma\alpha t\right)}{M_Z\left (-\frac \gamma\alpha t\right )}.$$
Here below we will consider some specific examples in which the moment generating function of the mixing variable $Z$ is known in closed form.

\begin{example}\label{ex_mix}
\begin{enumerate}
\item $Z$ gamma distributed, with parameters $a>0$ and $1$: $M_Z(u)=\left (1-u\right )^{-a}$, $h(z)=\left (1-\frac \gamma\alpha\ln(z)\right )^{-a}$, $z\in (0,1]$, $d_t(z) = \left(\frac{\frac \gamma\alpha t + z^{-\frac{1}{a}}}{1+\frac \gamma\alpha t}\right)^{-a}, \ z \in (0,1]$, and 
$$\bar F_t(x,y)=\left (1+\frac 1{\alpha+\gamma t}\ln\left (\alpha_2 e^{\gamma x}+\alpha_1 e^{\gamma y}+\left(1-\alpha_1-\alpha_2\right)e^{\gamma\,\max (x,y)}\right )\right)^{-a}$$
with $\alpha_1,\alpha_2\in (0,1)$ and $a,\gamma, \alpha>0$.
\item $Z$ stable distributed, with parameter $a\in (0,1]$: $M_Z(u)=e^{-\vert u\vert^a}$, $h(z)=e^{-\left (\frac \gamma\alpha\ln\left(\frac 1z\right )\right )^a}$, $z\in (0,1]$, $d_t(z) = \frac{e^{-\left(\frac \gamma\alpha t + (-\log(z))^{\frac{1}{a}}\right)^a}}{e^{-\left (\frac \gamma\alpha t\right)^a}}, \ z \in (0,1]$, and 
$$\bar F_t(x,y)=\exp\left\{-\left (\frac \gamma\alpha t+ 
\frac{1}{\alpha}\ln\left (\alpha_2 e^{\gamma x}+\alpha_1 e^{\gamma y}+\left(1-\alpha_1-\alpha_2\right)e^{\gamma\,\max (x,y)}\right )\right )^a+\left (\frac \gamma\alpha\right )^a t^a\right\}$$
with $\alpha_1,\alpha_2\in (0,1)$, $a\in (0,1]$ and $\alpha, \gamma>0$.
\item $Z$ Sibuya distributed, with parameter $a\in (0,1]$: $M_Z(u)=1-(1-e^
u)^a$, $h(z)=1-\left (1-z^{\frac \gamma\alpha}\right )^a$, $z\in [0,1]$,
$d_t(z) = \frac{1-\left(1-e^{-\frac \gamma\alpha t} + e^{- \frac \gamma\alpha t} (1-z)^{\frac{1}{a}}\right)^a}{1-\left (1-e^{-\frac \gamma\alpha t}\right)^a}, \ z \in [0,1]$, and
$$\bar F_t(x,y)=\frac{1-\left (1-e^{-\frac \gamma\alpha t}\left(\alpha_2 e^{\gamma x}+\alpha_1 e^{\gamma y}+\left(1-\alpha_1-\alpha_2\right)e^{\gamma\,\max (x,y)}\right )^{-\frac{1}{\alpha}}\right)^a}{1-\left(1-e^{-\frac \gamma\alpha t}\right)^a}
$$
with $\alpha_1,\alpha_2\in (0,1)$, $a\in (0,1]$ and $\alpha, \gamma>0$.
\item $Z$ distributed according to the logarithmic series distribution with parameter $a >0$:
$M_Z(u)=-\frac 1a\ln\left (1+(e^{-a}-1)e^u\right )$.
This framework can be re-parametrized by setting $\theta=e^{-a}-1\in (-1,0)$: in this case, we have that $M_Z(u)=\frac{\ln\left (1+\theta e^u\right )}{\ln (\theta +1)}$, $h(z)=\frac{\ln\left (1+\theta z^{\frac \gamma\alpha}\right )}{\ln (\theta +1)}$, $z\in[0,1]$, $d_t(z) = \frac{\log\left (1+e^{-\frac \gamma\alpha t} ((\theta+1)^z-1)\right)}{\log\left(1+\theta e^{- \frac \gamma\alpha t}\right)}, \ z \in [0,1]$, and
$$\bar F_t(x,y)=\frac{\ln\left (1+\theta e^{-\frac \gamma\alpha t}\left (\alpha_2 e^{\gamma x}+\alpha_1 e^{\gamma y}+\left(1-\alpha_1-\alpha_2\right)e^{\gamma\,\max (x,y)}\right )^{-\frac{1}{\alpha}}\right )}{\ln\left(1+\theta e^{-\frac \gamma\alpha t}\right)}
$$
with $\alpha_1,\alpha_2\in (0,1)$, $\theta\in (-1,0)$ and $\alpha, \gamma>0$.
\end{enumerate}
\end{example}

\section{Aging and generators}\label{Aging and generators}
The absence of aging effects characterizes the lack-of-memory property at any dimensional level, both in the strong and in the weak cases.
The importance of aging effects is well known in reliability theory as well as in actuarial risk
both in the life as well in the non-life cases.

Here, we focus on the two well known notions of increasing (decreasing) failure rate, IFR (DFR), and new better (worse) than used, NBU (NWU), that in the unidimensional case are defined by (see, for example, among the wide literature, Marshall and Olkin, 2007):
\begin{itemize}
\item IFR (DFR): $\bar F_{X_s}(x)\geq (\leq)\bar F_{X_t}(x)$, for all $x\geq 0$, for all $t\geq s\geq 0$, 
\item NBU (NWU): $\bar F(x)\geq (\leq)\bar F_{X_t}(x)$, for all $x\geq 0$, for all $t\geq 0$, 
\end{itemize}
where $X_t$ is the excess of $X$ above $t$ defined in (\ref{survival_one}).

It is well known that IFR (DFR) is equivalent to the log-concavity (log-convexity) of $\bar F$ (see Section 4 in Marshall and Olkin, 2007). Obviously, if $\bar F$ is IFR (DFR), then it satisfies the NBU (NWU) property, while if $\bar F_{X_t}$ satisfies the NBU (NWU) property for all $t$ , then $\bar F$ is IFR (DFR) (see Section 5 in Marshall and Olkin, 2007). 

In (\ref{d_tuni}), as well as in Proposition \ref{Prop_weak}, $d_t$ is defined in terms of a univariate survival function $\bar H$ (or, equivalently, in terms of a distortion $h$ given by $h(t)=\bar H\left(-\log(t)\right )$), that is  $d_t(x)=\frac {\bar H\left(t+\bar H^{-1}(x)\right )}{\bar H(t)}$,
where $\bar H$ is a strictly decreasing survival function of a strictly positive random variable. A similar representation holds true for the distortions $d_{s,t}$ in Proposition \ref{strong_strong}, that is (see (\ref{time_strong})) $d_{s,t}(x)=\frac{\bar H(s+ta+\bar H^{-1}(x)}{\bar H(s+ta)}$ with $a>0$. Since $d_{s,t}=d_{s+ta}$, we will restrict our analysis to $d_t$: all properties and arguments that we will study and apply to $d_t$ clearly hold true also for $d_{s,t}$.
\smallskip

\noindent The NBU (NWU) property for $\bar H$ is obviously equivalent to 
\begin{equation}\label{sub_x}d_t(x)\leq (\geq) x,\quad \text{ for all }t\geq 0,\, x\in [0,1] \end{equation}
while the IFR (DFR) property for $\bar H$ is equivalent to 
$$d_s(x)\geq (\leq)d_t(x),\quad \text{for all }t\geq s\geq  0,\, x\in [0,1].$$

The aging properties satisfied by the underlying survival distribution function $\bar H$ are inherited by the bivariate survival functions satisfying the generalized weak Marshall-Olkin functional equation (\ref{MO_weak}). More specifically, if $\bar H$ satisfies the IFR (DFR) or the NBU (NWU) property, then the bivariate survival functions satisfying (\ref{MO_weak}), with the associated $d_t$, satisfy the corresponding bivariate versions:
\begin{itemize}
\item 2-IFR (2-DFR): $\bar F_s(x,y)=d_s\left (\bar F(x,y)\right)\geq (\leq)d_t\left (\bar F(x,y)\right)=\bar F_t(x,y)$, for all $x,y\geq 0$, for all $t\geq s\geq 0$, 
\item 2-NBU (2-NWU): $\bar F(x,y)\geq (\leq)d_t\left (\bar F(x,y)\right)=\bar F_t(x,y)$, for all $x,y\geq 0$, for all $t\geq 0$.
\end{itemize}

We provide here some specific examples.
\begin{example}\label{Many_distortions}
\begin{enumerate}
\item $\bar H(x)=e^{-(ax)^\alpha}$, $a,\alpha>0$ (Weibull distribution): $h(t)=e^{-(-a\log(t))^\alpha}$ and $d_t(u)=e^{(at)^\alpha-\left( \left(-\log\, u\right)^{\frac 1\alpha}+at \right )^\alpha}$, which is decreasing in $t$ when $\alpha>1$ and increasing in $t$ when $0<\alpha<1$.
\item $\bar H(x)=e^{-\xi(e^{\mu x}-1)}$, $\xi,\,\mu>0$ (Gompertz distribution): $h(t)=e^{-\xi\left (t^{-\mu}-1\right )}$ and 
$d_t(u)=u^{e^{\mu\, t}}$ that is decreasing in $t$.
\item $\bar H(x)=(ax+1)^{-\frac 1\mu}$, $a,\mu>0$ (Pareto distribution): $h(t)=\left (1-a\log(t)\right)^{-\frac 1\mu}$ and 
$d_t(u)=\left (\frac{at+1}{u^{-\mu}+at}\right )^{\frac 1\mu}$, which is increasing in $t$.
\item $\bar H(x)=\left (\theta e^{ax}+1-\theta\right )^{-1}$, $a,\theta>0$: $h(t)=\left (\theta t^{-a}+1-\theta\right )^{-1}$ and 
$d_t(u)=\frac{\theta e^{at}+1-\theta}{e^{at}\left (u^{-1}-1+\theta\right)+1-\theta}$, which is decreasing in $t$ when $0<\theta <1$ and increasing in $t$ when $\theta>1$.
\item $\bar H(x)=\frac{\log(\theta e^{-ax}+1)}{\log(\theta +1)}$, $a>0$, $\theta > -1$: $h(t)=\frac{\log(\theta t^a+1)}{\log(\theta +1)}$ and 
$d_t(u)=\frac{\log\left (e^{-at}\left ((\theta +1)^u-1\right )+1\right )}{\log\left (\theta e^{-at}+1\right)}$, which is decreasing in $t$ when $\theta >0$ and increasing in $t$ when $-1<\theta<0$.
\item $\bar H(x)=\frac 4\pi\arctan\left(e^{-ax}\right)$, $a>0$: $h(t)=\frac 4\pi\arctan\left(t^a\right)$ and $d_t(u)=\frac{\arctan\left(e^{-at}\tan\left(\frac \pi 4u\right )\right)}{\arctan\left (e^{-at}\right )}$, which is always decreasing in $t$. 
\end{enumerate}\end{example}
\begin{remark}
In risk analysis the mean-excess function of a positive and integrable random variable $X$ is an important tool to study the tail behavior of the associated distribution, that is to analyze the tail riskiness associated to $X$. In the bivariate case, that is considering two risks $X_1$ and $X_2$, it can be useful to consider the average excess of each of the two with respect to a given threshold, given that both exceed it. In the case of distributions satisfying the weak bivariate Marshall-Olkin functional equation, we have that
$$e_{X_i}(t)=\mathbb E\left [X_i-t\vert X_1>t,X_2>t\right ]=\int_0^{+\infty}\bar F_{X_i-t\vert X_1>t,X_2>t}(z)dz=\int_0^{+\infty}d_t\left(\bar F_{X_i}(z)\right )dz,\quad i=1,2,$$
and the behaviour of $e_{X_i}$, for $t$ large, depends on the distortion $d_t$ which is determined by the distribution of $\min(X_1,X_2)$.

In fact, let $d_\infty(x)=\underset{t\to \infty}\lim d_t(x)$ exist for every $x\in(0,1)$.
If the distribution of $\min(X_1,X_2)$ satisfies the NBU property for all $t\geq 0$, then, since $d_t(x)\leq x$ for all $t\geq 0$ and $x\in [0,1]$ (see (\ref{sub_x})), $\underset{t\to \infty}\lim e_{X_i}(t)=\int_0^{+\infty}d_\infty\left(\bar F_{X_i}(z)\right )dz<+\infty$.
If the distribution of $\min(X_1,X_2)$ satisfies the DFR property then, since $d_t(x)$ is increasing with respect to $t\geq 0$ for all $x\in [0,1]$, again   
$\underset{t\to \infty}\lim e_{X_i}(t)=\int_0^{+\infty}d_\infty\left(\bar F_{X_i}(z)\right )dz$ but the right-hand side can be $+\infty$.
In particular, if the survival distribution of $\min(X_1,X_2)$ is heavy tailed, that is  $\underset{t\to \infty}\lim e_{\min(X_1,X_2)}(t)=+\infty$, then also $\underset{t\to \infty}\lim e_{X_i}(t)=+\infty$, for $i=1,2$ (see cases 1. and 3. in Example \ref{Many_distortions}).
\end{remark}

\noindent It can be easily verified that
(\ref{sub_x}) is equivalent to
 \begin{equation*}\label{sub_mult}h(x  y ) \leq  (\geq)\ h(x) \ h(y),\quad \text{ for all } x, y \in [0,1],\end{equation*}
that is to the fact that the associated generator $h$ is sub(super)-multiplicative.
\vskip 0.2cm
A sufficient condition for the sub-multiplicativity of a generator $h$ is given by the following Proposition:
    \begin{proposition}
        Let $h:[0,1] \rightarrow [0,1]$ be a strictly increasing and concave bijection such that $h^{'''}(x) \leq 0$. Then $h$ is sub-multiplicative in $[0,1]$.
    \label{sub_prop}
    \end{proposition}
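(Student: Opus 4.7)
The plan is to fix $y \in [0,1]$ and study the auxiliary function $\phi_y(x) := h(xy) - h(x)h(y)$ on $[0,1]$, aiming to show it is non-positive (which is exactly sub-multiplicativity). Since $h$ is a bijection of $[0,1]$ one has $h(0)=0$ and $h(1)=1$, so $\phi_y(0)=\phi_y(1)=0$. The strategy is then to establish that $\phi_y$ is convex on $[0,1]$: a convex function vanishing at both endpoints of an interval lies below its chord, which here is the constant $0$, forcing $\phi_y \leq 0$ throughout and yielding $h(xy)\leq h(x)h(y)$.

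To prove convexity I would compute
$$\phi_y''(x) = y^2\, h''(xy) - h(y)\, h''(x)$$
and use the two hypotheses as follows. First, the assumption $h''' \leq 0$ says that $h''$ is non-increasing; since $xy\leq x$ we therefore get $h''(xy)\geq h''(x)$, and multiplying by $y^2\geq 0$ yields $y^2 h''(xy) \geq y^2 h''(x)$. Second, concavity of $h$ together with $h(0)=0,\ h(1)=1$ implies that the graph of $h$ lies above the chord joining $(0,0)$ and $(1,1)$, i.e.\ $h(y)\geq y$ for $y\in[0,1]$, whence $h(y)\geq y \geq y^2$ on that interval. Combining,
$$\phi_y''(x) \;\geq\; y^2\, h''(x) - h(y)\, h''(x) \;=\; (y^2-h(y))\, h''(x) \;\geq\; 0,$$
because $y^2-h(y)\leq 0$ and $h''(x)\leq 0$ are both non-positive. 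This gives the desired convexity.

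The main obstacle I anticipate is the sign bookkeeping: both $h''(xy)$ and $h''(x)$ are non-positive, so the monotonicity of $h''$ delivered by $h'''\leq 0$ does not by itself settle the sign of $\phi_y''(x)$ — one also has to extract the concavity-free bound $h(y)\geq y^2$ in order to flip the remaining factor into the right direction. Once these two pieces are combined, the proof reduces to the elementary remark about convex functions with equal endpoint values, and no further boundary or regularity analysis is required (the twice-differentiability needed for $\phi_y''$ is already built into the hypothesis $h'''\leq 0$).
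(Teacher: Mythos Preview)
Your proof is correct and essentially identical to the paper's: both introduce $g(u,v)=h(uv)-h(u)h(v)$ (your $\phi_y$), note it vanishes on the boundary, and show $\partial^2 g/\partial u^2\geq 0$ using $h(v)\geq v^2$ together with the monotonicity of $h''$. The only cosmetic difference is that you conclude via ``convex with equal endpoints lies below the chord'' while the paper phrases the same fact as ``no interior maximum.''
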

    \begin{proof}
        Let us define $g:[0,1] \times [0,1]$ as
        $$g(u,v) = h(uv)-h(u) h(v),$$ then $h$ is sub-multiplicative in $[0,1]$ if and only if $g$ is non-positive.
        On the sides of the square $[0,1] \times [0,1]$,
        $g$ is equal to $0$, so it is sufficient to prove that there are not maximum points inside the square.
        But
        \begin{equation*}
            \frac{\partial^2 g (u,v) }{\partial^2 u} = 
            v^2 h^{''}(uv)-h(v) h^{''}(u)  \geq 
            h(v) [h^{''}(uv)-h^{''}(u)] \geq 0,
        \end{equation*}
       thanks to the concavity of $h$, the decreasingness of $h^{''}$ and noticing that a strictly increasing and concave function lies above the bisector of the first quadrant in the interval $[0,1]$.
        Since $\frac{\partial^2 g}{\partial^2 u}$ is non-negative, there are no maximum points inside the square, meaning that $g(u,v) \leq 0 \ \forall (u,v) \in [0,1] \times [0,1].$
    \end{proof}
An analogous result for super-multiplicativity holds true and can be proved similarly:
    \begin{proposition}
        Let $h:[0,1] \rightarrow [0,1]$ be a strictly increasing and convex bijection such that $h^{'''}(x) \geq 0$ and $h(x) \geq x^2$, $\forall x \in [0,1]$. Then $h$ is super-multiplicative in $[0,1]$.
    \label{super_prop}
    \end{proposition}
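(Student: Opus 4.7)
The plan is to mirror the argument used in Proposition~\ref{sub_prop}, with the direction of curvature reversed because the desired inequality and the convexity hypothesis are both flipped. I would define $g:[0,1]\times[0,1]\to\mathbb{R}$ by $g(u,v)=h(uv)-h(u)\,h(v)$, so that super-multiplicativity of $h$ is equivalent to $g\geq 0$. Since $h$ is a strictly increasing bijection of $[0,1]$ we have $h(0)=0$ and $h(1)=1$, and hence $g$ vanishes identically on the boundary of the unit square.

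Unlike the sub-multiplicative case, where one excludes interior maxima by showing convexity in each variable, here I would instead show that for every fixed $v\in[0,1]$ the section $u\mapsto g(u,v)$ is \emph{concave} on $[0,1]$; combined with $g(0,v)=g(1,v)=0$, concavity forces $g(u,v)\geq 0$ on the whole square. Computing
$$\frac{\partial^{2} g(u,v)}{\partial u^{2}}=v^{2}\,h''(uv)-h(v)\,h''(u),$$
I would chain two elementary bounds: the hypothesis $h'''\geq 0$ makes $h''$ non-decreasing, so $h''(uv)\leq h''(u)$, while convexity ($h''\geq 0$) together with the hypothesis $h(v)\geq v^{2}$ gives $v^{2}h''(u)\leq h(v)\,h''(u)$; concatenating these two inequalities yields $\partial^{2}g/\partial u^{2}\leq 0$.

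The main subtlety, and the reason the assumption $h(x)\geq x^{2}$ appears in the statement, is that for a strictly increasing convex bijection of $[0,1]$ fixing the endpoints one only has $h(v)\leq v$ from convexity, which is the reverse of the concave case where $h(v)\geq v\geq v^{2}$ was automatic. The extra hypothesis $h(v)\geq v^{2}$ is precisely the minimal replacement that closes the chain of inequalities above; once this bound is granted, no further difficulty arises, and the conclusion follows from the elementary one-variable fact that a concave function vanishing at both endpoints is nonnegative between them.
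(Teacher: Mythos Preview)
Your proof is correct and is precisely the adaptation the paper has in mind when it says the super-multiplicative case ``can be proved similarly'': you mirror the argument of Proposition~\ref{sub_prop}, reversing the sign of the second partial via $h''\geq 0$, the monotonicity of $h''$ from $h'''\geq 0$, and the additional hypothesis $h(v)\geq v^{2}$ that replaces the automatic bound $h(v)\geq v$ available in the concave case. Your phrasing via one-variable concavity of $u\mapsto g(u,v)$ together with the endpoint zeros is in fact a slightly crisper way to conclude than the paper's ``no interior maximum'' wording, but the underlying idea is identical.
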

\begin{example} By applying Propositions \ref{sub_prop} and \ref{super_prop}, it can be  verified that:
\begin{itemize}
\item $h(x) = \frac{3x-x^3}{2}$ is a sub-multiplicative generator,
\item $h(x) = \frac{\sin(\theta x)}{\sin(\theta)}$, for $0 < \theta < \frac{\pi}{2}$, is a sub-multiplicative generator,
\item $h(x) = \frac{1}{4} x^3 + \frac{1}{2} x^2 + \frac{1}{4} x$ is a super-multiplicative generator.
\end{itemize}
\end{example}

\section{Dependence structure dynamics in the Bivariate Weak Case}\label{Dependence}
Aim of this section is to analyze the dynamics of the dependence structure of ${\bf X}_{t}$ (see (\ref{survival_two_weak})) induced by the generalized weak bivariate Marshall-Olkin functional equation (\ref{MO_weak}). 
Since, thanks to Remark \ref{conditional_PLMP}, the conditional distribution $\bar F_t$ is obtained through a time dependent distortion of a function satisfying the classical bivariate weak lack-of-memory property functional equation (\ref{weak standard equation}), some useful properties and formulas provided in 
Ricci (2024) for $\bar F=\bar F_0$ still hold true for $\bar F_t$.

In particular, from (\ref{sing}), if $h$ is twice differentiable with $h^\prime(x)>0$, for all $x\in [0,1]$,  we can immediately conclude that the singularity mass of the distribution $\bar F_t$ is independent of $t$ being independent of the distortion but only dependent on $\bar G=h^{-1}\left (\bar F\right)$ satisfying the functional equation (\ref{weak standard equation}). However, the distribution of the singularity mass of $\bar F_t$ on the straight line $x=y$ changes with $t$ according to $S_t(x)=\mathbb P(X=Y)\cdot h_t(e^{-x})$.
\medskip

By (\ref{MO_weak}), if $C$ is the survival copula associated to $\bar F$, then the copula function associated to $\bar F_t$ is
\begin{equation}\label{t_cop}C_t(u,v)=d_t\left (C\left (d_t^{-1}(u),d_t^{-1}(v)\right )\right ),\end{equation}
and the dependence structure evolves with time according to the above distorted copula. Thanks to Remark \ref{conditional_PLMP}, 
if $\bar G=h^{-1}\left (\bar F\right )$ and $\bar G_1$ and $\bar G_2$ are the corresponding marginal survival distribution functions (see (\ref{PWBLMP_eq}) and 
(\ref{PWBLMPh_eq})),
the time dependent copula $C_t$ in (\ref{t_cop}) can be alternatively written as
\begin{equation}\label{c_cop}C_t(u,v)=h_t\left (C_{\bar G}\left (h_t^{-1}(u),h_t^{-1}(v)\right )\right )\end{equation}
where 
\begin{equation}\label{copula_G}C_{\bar G}(w_1,w_2)=\bar G\left (\bar G_1^{-1}(w_1),\bar G_2^{-1}(w_2)\right ),\end{equation}
with $h_t$ given by (\ref{t_dep_dist}).

\subsection{Kendall's function}\label{PWB_Kendall}

The Kendall's function of a random vector $(X,Y)$ with cumulative distribution $H$ is defined as
$$K(t) = P(H(X,Y) \leq t), \ t \in [0,1].$$
Since it actually only depends on the copula associated to $H$, it turns out to be a very useful tool to study the dependence between the components of a bivariate random vector (see among the others, Nelsen et al., 2003, Nelsen, 2006, and Joe, 2014).
In the case of perfect positive dependence, $K(t) = t$, $ t \in [0,1]$, while, in case of independence, $K(t) = t-t\log(t)$, $t \in (0,1]$.
Moreover, it is well known that the Kendall's $\tau$ is a statistics used to measure the ordinal association between two random variables and that it can be recovered from the Kendall's function through 
$\tau = 3-4\int_{0}^{1} K(t) \ dt$.
\vskip 0.1 cm
In Ricci (2024), a general formula for the Kendall's function is provided in case of a bivariate distribution satisfying (\ref{Weak Pseudo Equation}), that is of type (\ref{PWBLMPh_eq}). According to the notation used in (\ref{PWBLMPh_eq}), $K$ is given by
\begin{equation*}
K(s)=
s-H(h^{-1}(s))
\label{Kendall_equation}
\end{equation*}
where
$$H(v)=h^\prime\left (v\right)v\left[2\ln\left(v\right)+\frac 1\lambda \left(J_1(v)+J_2(v)\right)\right]\text{ and }
J_i(v)=\int_0^{\bar G_i^{-1}\left (v\right )}\frac{g_i^2(z)}{\bar G_i^2(z)}dz,\,i=1,2,$$
with $g_i=-\bar G_i^\prime$ for $i=1,2$.
\vskip 0.2cm

As a consequence of Remark \ref{conditional_PLMP}, the expression of the Kendall's function $K_t$ of the copula $C_t$ in (\ref{c_cop}) is given by
$$K_{t}(s)=
s-H_{h_t}\left (h_t^{-1}(s)\right ) = s-H_{h_t}\left(\frac{h^{-1}(s \ h(e^{-t}))}{e^{-t}}\right),$$
where
\begin{equation*}
H_{h_t}(v)=h_t^\prime\left (v\right)v\left[2\ln\left(v\right)+\frac 1\lambda \left(J_1(v)+J_2(v)\right)\right] =\frac{e^{-t}h^{'}(e^{-t} v)}{h(e^{-t})} v \left[2\ln\left(v\right)+\frac 1\lambda \left(J_1(v)+J_2(v)\right)\right],
\end{equation*}
with $J_i$, $i=1,2$, obtained considering 
$\bar G_i=h^{-1}\left (\bar F_i\right )$ where $\bar F_i$, $i=1,2$, are the $i$-th marginal survival distribution of $\bar F$ satisfying (\ref{MO_weak}).

\begin{example}\label{Example6}
Let us consider the setup of Subsection \ref{mixing_sec}. It can be verified that, for the survival distribution family (\ref{MU_LMP}),
\begin{equation*}
    J_i(x) = \frac{\gamma}{\alpha^2} \left(\alpha_i x^\alpha - \alpha \log(x)-\alpha_i\right).
\end{equation*}
\noindent The expression of the Kendall's functions in all cases analized in Example \ref{ex_mix} can be easily determined.
\begin{enumerate}
\item
If $h(x) = (1-\frac\gamma\alpha\ln(x))^{-a}, \ a,\alpha,\gamma > 0$, then 
$$K_t(s) = 
s - \frac{a \gamma (\alpha_1 + \alpha_2)}{\alpha^2} \cdot s^{\frac{a+1}{a}} \cdot \left(1 + \frac{\gamma}{\alpha} t\right)^{-1} \cdot \left( \exp\left( t \alpha + \frac{\alpha^2}{\gamma} \left(1 - s^{-1/a} \left(1 + \frac{\gamma}{\alpha} t \right) \right) \right) - 1 \right).
$$
\item
If $h(x) = e^{-(-\frac{\gamma}{\alpha}\log(x))^a}, a\in (0,1]$, then 
\begin{equation*}
K_{t}(s) = s \left(1 - \frac{a\gamma}{{\alpha}^{2}}\left({\alpha}_{2} + {\alpha}_{1}\right)  \left(\left(\frac{t{\gamma}}{\alpha}\right)^{a} - \ln\left(s\right)\right)^{\frac{a - 1}{a}} \left(\mathrm{e}^{t{\alpha} - \frac{{\alpha}^{2} \left(\left(\frac{t{\gamma}}{\alpha}\right)^{a} - \ln\left(s\right)\right)^{\frac{1}{a}}}{\gamma}} - 1\right)\right).
\end{equation*}
\item
If $h(x) = 1-\left (1-x^{\frac{\gamma}{\alpha}}\right)^a, \ a \in ( 0,1]$, then 
\begin{equation*}
K_t(s)  =  s - \frac{a \left({\alpha}_{2} + {\alpha}_{1}\right) {\gamma}}{{\alpha}^{2} v_t} \left(\mathrm{e}^{t{\alpha}} \left(1 - \left(1 - s v_t\right)^{\frac{1}{a}}\right)^{\frac{{\alpha}^{2}}{\gamma}} - 1\right) 
          \left(1 - \left(1 - s v_t\right)^{\frac{1}{a}}\right) \left(1 - sv_t\right)^{\frac{a - 1}{a}}
\end{equation*}
where $v_t=1-\left (1-e^{-\frac {t\gamma}\alpha}\right )^a$.
\item
If $h(x) = \frac{\log\left(1+\theta x^{\frac{\gamma}{\alpha}}\right)}{\log(1+\theta)}, \ \theta \in (-1,0)$, then 
\begin{equation*}
 K_{t}(s) = s - \frac{{\gamma}\left({\alpha}_{2} + {\alpha}_{1}\right)  \left(\left( {\theta}\mathrm{e}^{-\frac{t{\gamma}}{\alpha}} + 1\right)^{s} - 1\right)}{{\alpha}^{2} \left( {\theta}\mathrm{e}^{-\frac{t{\gamma}}{\alpha}} + 1\right)^{s} \ln\left({\theta}\mathrm{e}^{-\frac{t{\gamma}}{\alpha}}  + 1\right)} \left(\theta^{-\frac{\alpha^2}\gamma}\mathrm{e}^{t{\alpha}}  \left(\left({\theta}\mathrm{e}^{-\frac{t{\gamma}}{\alpha}}  + 1\right)^{s} - 1\right)^{\frac{{\alpha}^{2}}{\gamma}} - 1\right).
\end{equation*}
\end{enumerate}
\end{example}

Other families of distributions satisfying the generalized bivariate weak Marshall-Olkin functional equation (\ref{MO_weak}) can be constructed starting from alternative underlying bivariate survival distributions satisfying the weak bivariate lack-of-memory property. Here, we analyze the case that allows to recover the bivariate Gompertz distribution introduced and studied in Marshall and Olkin (2015).
\vskip 0.2cm
This is the case of a generalization of the survival distribution function (\ref{MU_LMP}), still satisfying the bivariate weak lack-of-memory property (\ref{weak standard equation}), defined as 
\begin{equation*}\label{MU_LMP_G}
\bar G_{\alpha,\gamma_1,\gamma_2,\alpha_1,\alpha_2}(x,y)=\left\{
\begin{array}{cc}
e^{- \lambda y}\left (\alpha _1+(1-\alpha_1)e^{\gamma_1(x-y)}\right )^{-\frac{1}{\alpha}},& x\geq y\geq 0\\
e^{- \lambda x}\left (\alpha _2+(1-\alpha_2)e^{\gamma_2(y-x)}\right )^{-\frac{1}{\alpha}},& 0\leq x< y\\
\end{array}
\right .,\end{equation*}
with $\alpha_1,\alpha_2\in (0,1)$ and $\alpha,\gamma_1,\gamma_2 >0$. This is a bivariate survival distribution function if and only if 
$\frac 1\alpha\max\left(\gamma_1,\gamma_2\right )\leq \lambda\leq \frac 1\alpha\left (\gamma_1(1-\alpha_1)+\gamma_2(1-\alpha_2)\right )$
with marginal survival distribution functions $\bar G_{\alpha,\gamma_i,\alpha_i,i}(z)=\left (\alpha_i+(1-\alpha_i)e^{\gamma_i z}\right )^{-\frac{1}{\alpha}}, \ i = 1,2$:
the distribution (\ref{MU_LMP}) is recovered when $\gamma_1=\gamma_2=\gamma$ and $\lambda=\frac\gamma\alpha$. Moreover, the singularity mass on the line $x=y$ is $\frac{(1-\alpha_1)\gamma _1+(1-\alpha_2)\gamma_2}{\alpha\lambda}-1$.


\begin{example}\label{MO15}
Let us consider the strictly increasing bijection of $[0,1]$, $h(x) = \exp(-\xi(x^{-1}-1))$, with $\xi>0$. The function
\begin{equation*}\begin{aligned}
 \bar{F}(x,y) &= h\left (\bar{G}_{\alpha,\gamma_1,\gamma_2,\alpha_1,\alpha_2}\right )=\\
&=     \begin{cases}
       \mathrm{e}^{-{\xi} \left(\mathrm{e}^{y{\lambda}} \left(\left(1 - {\alpha}_{1}\right) \mathrm{e}^{{\gamma_1} \left(x - y\right)} + {\alpha}_{1}\right)^{\frac{1}{\alpha}} - 1\right)} \ x \geq y \\
       \mathrm{e}^{- {\xi} \left(\mathrm{e}^{x{\lambda}} \left(\left(1 - {\alpha}_{2}\right) \mathrm{e}^{{\gamma_2} \left(y-x\right)} + {\alpha}_{2}\right)^{\frac{1}{\alpha}} - 1\right)} \ x < y \\
       \end{cases}\end{aligned}
    \label{F_gamma_1_gamma_2}
\end{equation*}
is a bivariate survival function if $\frac 1\alpha\max\left(\gamma_1,\gamma_2\right )\leq \lambda\leq \frac 1\alpha\left (\gamma_1(1-\alpha_1)+\gamma_2(1-\alpha_2)\right )$ and $\xi\geq 1$. 
\vskip 0.2cm

This class of distributions contains, as a particular specification, setting $\alpha = 1$, the survival distribution (\ref{MO2015}) studied in Marshall and Olkin (2015). In fact, (\ref{MO2015}) can be obtained setting $\lambda_i = \gamma_i$ and $\xi_i = \xi (1-\alpha_i)$, $i = 1,2$, with $\lambda \geq \max(\lambda_1,\lambda_2)$, $\lambda (\xi-1) \geq \max(\lambda_1 (\xi_1-1), \lambda_2 (1-\xi_2))$ and $\lambda_1 \xi_1 + \lambda_2 \xi_2 \geq \lambda \xi$. As a consequence, the analysis made in previous sections can be applied to the bivariate Gompertz distribution of Marshall and Olkin (2015). In particular, the survival distribution of the residual lifetimes is
\begin{equation*}\label{MO_app}
 \bar{F}_t(x,y) = 
     \begin{cases}
          e^{-e^{\lambda t}\left[\xi_1e^{\lambda y}\left (e^{\lambda_1(x-y)}-1\right )+\xi\left (e^{\lambda y}-1\right )\right]}  
 \ x \geq y \\
 e^{-e^{\lambda t}\left[\xi_2e^{\lambda y}\left (e^{\lambda_2(x-y)}-1\right )+\xi\left (e^{\lambda y}-1\right )\right]}   \ x < y \\
       \end{cases}.
\end{equation*}
and the time dependent associated Kendall Function is
\begin{equation*}
\begin{split}
& K_t(x) = x \left(1 -  {\xi}\mathrm{e}^{t{\lambda}} v_t(x) \left(\left(\frac{{\lambda}_{2} + {\lambda}_{1}}{\lambda} - 2\right) \ln\left(v_t(x)\right) +\frac 1\lambda\left(\frac{1}{v_t(x)} - 1\right)\sum_{i=1}^2{\lambda}_{i} \left(1 - \frac{{\xi}_{i}}{\xi}\right)
\right)\right)
\end{split}
\label{K_MO_2015}
\end{equation*}
where $v_t(x)=1-\frac 1\xi e^{-t\lambda}\ln(x)$.
\vskip 0.5cm
It can be verified that $K_t(x)\geq x-x\log(x)$ for all $x\in[0,1]$ and $t\geq0$, for any choice of admissible parameters: as a consequence, the bivariate Gompertz distribution allows to model joint residual lifetimes, with marginal Gompertz distributions, when they exhibit negative dependence.

\end{example}

\begin{example}Let us consider the strictly increasing bijection of $[0,1]$, $h(x) = \frac{\log(\rho x +1)}{\log(\rho +1)}, \rho \in (-1,0) \cup (0,\infty)$. The function
\begin{equation*}\label{Pippo}
\bar{F}(x,y) = h(\bar{G}_{1,\gamma_1,\gamma_2,\alpha_1,\alpha_2}(x,y))=
\begin{cases}
    \frac{\log(\rho e^{-\lambda y} \left((1-\alpha_1)e^{\gamma_1(x-y)}+\alpha_1\right)^{-1}+1)}{\log(\rho +1)} \\
    \frac{\log(\rho e^{-\lambda x} \left((1-\alpha_2)e^{\gamma_2(x-y)}+\alpha_2\right)^{-1}+1)}{\log(\rho +1)} \\
\end{cases}
\end{equation*}
is a survival distribution function if $\max(\gamma_1,\gamma_2) \leq \lambda \leq \gamma_1 (1-\alpha_1) + \gamma_2(1-\alpha_2)$ and $\rho \in (-1,0) \cup (0,\infty)$. 
The survival distribution of the residual lifetimes is
\begin{equation*}
\bar{F}_t(x,y) = h_t(\bar{G}_{1,\gamma_1,\gamma_2,\alpha_1,\alpha_2}(x,y))=
\begin{cases}
    \frac{\log(\rho e^{-\lambda t-\lambda y} \left((1-\alpha_1)e^{\gamma_1(x-y)}+\alpha_1\right)^{-1}+1)}{\log(\rho e^{-\lambda t}+1)} \\
    \frac{\log(\rho e^{-\lambda t-\lambda x} \left((1-\alpha_2)e^{\gamma_2(x-y)}+\alpha_2\right)^{-1}+1)}{\log(\rho e^{-\lambda t}+1)} \\
\end{cases}
\end{equation*}
 and the associated Kendall distribution function is
\begin{equation*}
\begin{split}
    K_t(x) =  x - &\frac{v_t(x) - 1}{\frac 1xv_t(x)\ln(v_t(x))}\cdot\\
&\cdot  \left(\frac{\left(\alpha_2 \gamma_{2} + \alpha_1 \gamma_{1}\right) \left(\mathrm{e}^{\lambda t}\left (v_t(x)-1\right ) - {\rho} \right)}{{\lambda}{\rho}} +
\left(2 - \frac{\gamma_{2} + \gamma_{1}}{\lambda}\right) \left(\ln\left(\frac{v_t(x) - 1}\rho\right)  + \lambda t\right)\right)
    \label{K82}
\end{split}
\end{equation*}
where $v_t(x)=\left (e^{-\lambda t}\rho+1\right )^x$.

Varying the admissible parameters, it can be shown that this distribution provides a very wide class of dependence structures. For example, Figure \ref{Fig1} displays a case in which the dependence is positive and increasing with the conditioning time $t$ (Left) and a case in which the dependence is negative and increasing with the time $t$ (Right). 

\begin{figure}[h!]
 \centering
 \includegraphics[width=.4\linewidth]{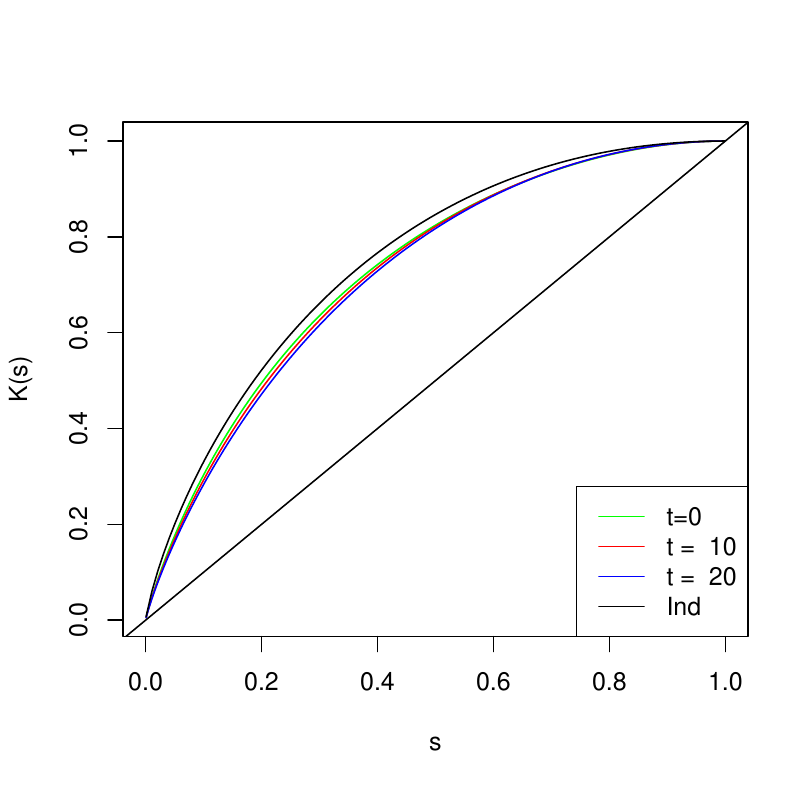}
\includegraphics[width=.4\linewidth]{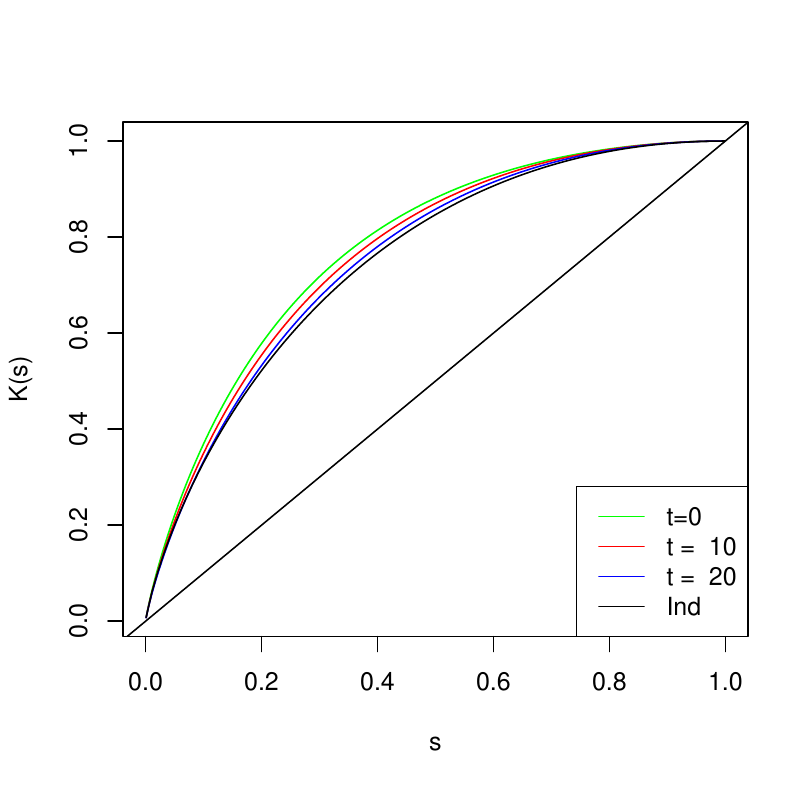}
   \caption{Left: $\lambda=0.0641$, $\alpha_1 = 0.31$, $\alpha_2 = 0.3611$, $\gamma_1 = 0.05$, $\gamma_2 = 0.0463$ and $\rho = 10$. Right: $\lambda=0.0726$, $\alpha_1 = 0.15$, $\alpha_2 = 0.2129$, $\gamma_1 = 0.046$, $\gamma_2 = 0.0426$ and $\rho = 10$.}
 \label{Fig1}
\end{figure}
Let us assume that these particular specifications are the distributions of the joint residual lifetimes of the two individuals in two different married couples where time $0$ is the starting observation time. In both cases, the probability of simultaneous death is equal to $0.01\%$ and parameters are chosen so that the average expectations of the two lifetimes at time $0$ are 39.5 and 43.4 years, respectively. The case of positive and increasing dependence over time is consistent with the idea that,
as a couple continues to survive together, their lives become mutually tied, in line with the well known broken hearth effect; the opposite case may happen if the death of one of the two can improve the life of the other one and so reduce his/her probability of death (see for example Gourieroux and Lu, 2015, for a discussion about this phenomenon). These two different scenarios clearly influence the premium of insurance policies written on the joint residual lifetimes (see for example Denuit et al., 2006, for a study on the influence of dependence on joint life insurance products). As an illustrative example, in Table \ref{Ann_Log}, we focus on a continuous joint whole life annuity (paying $1$ per year as long as both individuals are alive), where for the sake of simplicity we assume a discount factor equal to one, and we compare the net single premium for deferred and not deferred contracts with the case of independence between the two residual lifetimes.

\begin{table}
\centering
\begin{tabular}{|c|c|c|c|} 
\hline
& $t=0$ & $t=10$ &$t=20$\\
\hline
Fig. \ref{Fig1}, Left&27.2170 &18.4047 &11.8301\\
\hline
Indep.&25.7805 &16.9899 &10.5226\\
\hline
\end{tabular}
\begin{tabular}{|c|c|c|c|} 
\hline
& $t=0$ & $t=10$ &$t=20$\\
\hline
Fig. \ref{Fig1}, Right&24.0691& 15.4105& 9.2493\\
\hline
Indep.&25.2736 &16.6022 &10.3524\\
\hline
\end{tabular}
\caption{ Annuity net premiums in cases of Figure \ref{Fig1} versus independence}\label{Ann_Log}
\end{table}

\end{example}

\subsection{Tail dependence coefficients}
The lower and upper tail dependence coefficients of a copula $C$ are defined as
$$\lambda_L(C)=\underset{u\to 0^+}\lim \frac{C(u,u)}u\quad\text{ and }\quad\lambda_U(C)=\underset{u\to 1^-}\lim \frac{1-2u+C(u,u)}{1-u}.$$
In Durante et al. (2010), the authors study the effect of a distortion $\psi$ (given by a strictly increasing bijection of $[0,1]$) on the tail dependence coefficients of the distorted copula $C_\psi(u,v)=\psi\left (C\left(\psi^{-1}(u),\psi^{-1}(v)\right)\right )$. In particular, they analyze the case in which the behavior of $\psi(t)$ in the right neighborhood of $0$ and in the left neighborhood of $1$ is of power type, 
obtaining the following results:
\begin{enumerate}
\item if there exist $b,\alpha >0$ such that $\psi(z)\underset {t\downarrow 0}\sim bz^\alpha$, then  $\lambda_L(C_\psi)=\left (\lambda_L(C)\right )^\alpha$. 
\item if there exist $b,\alpha >0$ such that $1-\psi(z)\underset {t\uparrow 1}\sim b(1-z)^\alpha$, then  $\lambda_U(C_\psi)=2-\left (2-\lambda_U(C)\right )^\alpha$.
\end{enumerate}

In case the distortion $\psi$ decays to zero at an exponential speed, that is that there exist $a,\beta >0$ such that $\psi(z)\underset{t\downarrow 0}\sim e^{-az^{-\beta}}$ (this is the case of the distorion $h$ that allows to construct the bivariate Gompertz distribution in Marshall and Olkin, 2015, see Example \ref{MO15}), it can be easily verified that, if $\lambda_L(C)<1$, then $\lambda_L(C_\psi)=0$. In fact, 
$$\begin{aligned}\lambda_L(C_\psi)&=\underset{u\downarrow 0}\lim \frac{\psi\left (C\left(\psi^{-1}(u),\psi^{-1}(u)\right)\right )}{u}
=\underset{w\downarrow 0}\lim \frac{\psi\left (C(w,w)\right )}{\psi(w)}=
\underset{w\downarrow 0}\lim \frac{\exp\left (-aC^{-\beta}(w,w)\right )}{\exp (-aw^{-\beta})}=\\
&=\underset{w\downarrow 0}\lim \exp\left (-aC^{-\beta}(w,w)\left [1-\frac{C^\beta(w,w)}{w^\beta}\right ]\right )=0.
\end{aligned}$$

Since the family of distortions $d_t$, with $t\in [0,+\infty)$, can be expressed in terms of the generator $h$ (see (\ref{d})), we study the impact of the choice of $h$ on the time dependent tail coefficients.\medskip

In the sequel, with $C=C_0$ we denote the copula associated to $\bar F_0=\bar F$, while $C_{\bar G}$ is defined in (\ref{copula_G}).
\vskip 0.3cm
\noindent We start analyzing the case of $\lambda_L$.
\begin{lemma}\label{Lemma6}
\begin{enumerate}
\item If there exist $a,\beta>0$ such that $h(x)\underset{x\downarrow 0}\sim ax^\beta$,
then $d_t(z)\underset {z\downarrow 0}\sim bz$, for some $b>0$, implying that 
$\lambda_L(C_t)=\lambda_L(C)=\left (\lambda_L(C_{\bar G})\right )^\beta$.
\item If there exist $a,d,\beta>0$ such that
$h(x)\underset{x\downarrow 0}\sim de^{-ax^{-\beta}}$, then
$d_t(z)\underset {z\downarrow 0}\sim bz^{e^{\beta t}}$ for some $b>0$, implying that 
$\lambda_L(C_t)=\left (\lambda_L(C)\right )^{e^{\beta t}}$. If $\lambda_L(C_{\bar G})<1$, then  $\lambda_L(C_t)=\lambda_L(C)=0$.
\end{enumerate}\end{lemma}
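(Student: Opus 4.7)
The plan is to exploit the explicit representation $d_t(z)=h(e^{-t}h^{-1}(z))/h(e^{-t})$ from formula (\ref{d}), substitute each of the two asymptotic assumptions on $h$, and identify the resulting power-type behavior of $d_t$ near $0$. In each case the conclusion on $\lambda_L(C_t)$ follows immediately from the power-type distortion result of Durante et al.\ (2010) recalled just before the statement, applied to $\psi=d_t$ via (\ref{t_cop}). The relation to $\lambda_L(C_{\bar G})$ is obtained by applying the same Durante et al.\ results to $\psi=h$ itself via (\ref{c_cop}) at $t=0$, using that $h_0=h$ since $h(1)=1$.

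For part (1), I would first invert the assumption $h(x)\sim ax^\beta$ to get $h^{-1}(z)\sim(z/a)^{1/\beta}$ as $z\downarrow 0$. Plugging into (\ref{d}), $h(e^{-t}h^{-1}(z))\sim ae^{-\beta t}(h^{-1}(z))^\beta\sim e^{-\beta t}z$, whence $d_t(z)\sim bz$ with $b=e^{-\beta t}/h(e^{-t})$, i.e.\ a power asymptotic of exponent $\alpha=1$. Durante et al.\ then yields $\lambda_L(C_t)=\lambda_L(C)$. Applying the power-type Durante result to $\psi=h$ itself (with exponent $\beta$) through (\ref{c_cop}) at $t=0$ gives $\lambda_L(C)=(\lambda_L(C_{\bar G}))^\beta$, closing part (1).

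For part (2), the assumption $h(x)\sim de^{-ax^{-\beta}}$ inverts to $h^{-1}(z)\sim(a/\log(d/z))^{1/\beta}$ as $z\downarrow 0$, so $(h^{-1}(z))^{-\beta}\sim\log(d/z)/a$. Substituting into the numerator of $d_t$ and simplifying,
\begin{equation*}
h(e^{-t}h^{-1}(z))\sim d\exp\!\bigl(-ae^{\beta t}(h^{-1}(z))^{-\beta}\bigr)\sim d\exp\!\bigl(-e^{\beta t}\log(d/z)\bigr)=d^{1-e^{\beta t}}z^{e^{\beta t}},
\end{equation*}
so $d_t(z)\sim bz^{e^{\beta t}}$ with $b=d^{1-e^{\beta t}}/h(e^{-t})$, a power asymptotic of exponent $\alpha=e^{\beta t}$. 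The power-type Durante et al.\ result then gives $\lambda_L(C_t)=(\lambda_L(C))^{e^{\beta t}}$. For the final assertion, I would apply the exponential-distortion part of Durante et al.\ (stated in the paragraph preceding the lemma) to $\psi=h$ via (\ref{c_cop}) at $t=0$: if $\lambda_L(C_{\bar G})<1$, then $\lambda_L(C)=\lambda_L(C_0)=0$, so $\lambda_L(C_t)=0^{e^{\beta t}}=0$.

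The only subtle point I expect is the asymptotic manipulation inside the exponential in part (2), where one must justify passing from $h^{-1}(z)\sim(a/\log(d/z))^{1/\beta}$ to an equivalence of $\exp(-ae^{\beta t}(h^{-1}(z))^{-\beta})$. This is legitimate because $\log(d/z)\to+\infty$ as $z\downarrow 0$ dominates the $o(1)$ error inside the exponent, which contributes only a bounded multiplicative factor that can be absorbed into the constant $b$. All remaining steps are direct substitutions in (\ref{d}), (\ref{t_cop}), and (\ref{c_cop}) combined with citations to Durante et al.\ (2010).
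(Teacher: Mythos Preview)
Your proposal is correct and follows essentially the same approach as the paper: determine the power-type asymptotic of $d_t(z)$ near $0$ from the assumed asymptotic of $h$, then invoke the Durante et al.\ distortion results via (\ref{t_cop}) and (\ref{c_cop}). The paper's only difference is cosmetic---rather than inverting $h$ asymptotically, it substitutes $x=h^{-1}(z)$ and computes $\lim_{x\downarrow 0}h(e^{-t}x)/h(x)^{\alpha}$ directly, which makes the diverging exponential terms cancel exactly and avoids the delicate ``asymptotic inside an exponential'' step you flag in part~(2).
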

\begin{proof}
In case {\it 1.}
$$\begin{aligned}
\underset{z\downarrow 0}\lim \frac{d_t(z)}{z}=
\underset{x\downarrow 0}\lim \frac 1{h(e^{-t})}\frac{h(e^{-t}x)}{h(x)}=\frac 1{h\left(e^{-t}\right)}\underset{x\downarrow 0}\lim \frac{ae^{-\beta t} x^\beta}{a x^{\beta}}=\frac {e^{-\beta t}}{h(e^{-t})}.
\end{aligned}$$

\noindent In case {\it 2.}

$$\begin{aligned}
\underset{z\downarrow 0}\lim \frac{d_t(z)}{z^{e^{\beta t}}}=
\underset{x\downarrow 0}\lim \frac 1{h\left(e^{-t}\right)}\frac{h\left (e^{-t}x\right )}{h^{e^{\beta t}}(x)}=\frac 1{h\left (e^{-t}\right)}\underset{x\downarrow 0}\lim \frac{e^{-ae^{\beta t}x^{-\beta}}}{d^{e^{\beta t} -1}e^{-e^{\beta t} ax^{-\beta}}}=\frac 1{d^{e^{\beta t} -1}h\left (e^{-t}\right )}.
\end{aligned}$$

\end{proof}

\begin{example}

Let us consider the survival distribution (\ref{MU_LMP}). The associated survival copula function is
\begin{equation}
\begin{split}
& C_{\bar G_{\alpha,\gamma,\alpha_1,\alpha_2}}(u,v)= \\
& = \left (\alpha_2\left (\frac{u^{-\alpha}-\alpha_1 }{1-\alpha_1}\right )+\alpha_1\left (\frac{v^{-\alpha}-\alpha_2 }{1-\alpha_2}\right )+(1-\alpha_1-\alpha_2)\max\left (\frac{u^{-\alpha}-\alpha_1 }{1-\alpha_1},\frac{v^{-\alpha}-\alpha_2 }{1-\alpha_2}\right)\right)^{-\frac 1\alpha}.
\end{split}
\label{C_alpha}
\end{equation}
If we assume $\alpha_1\geq \alpha_2$ (the opposite case is completely analogous), we get
$$\lambda_L(C_{\bar G_{\alpha,\gamma,\alpha_1,\alpha_2}})=\underset{u\to 0^+}\lim\frac{\left (u^{-\alpha}\left (1+\frac {\alpha_1}{1-\alpha_2}\right )-\frac{\alpha_1}{1-\alpha_2}\right )^{-\frac 1\alpha}}{u}=
\left(\frac{1-\alpha_2}{1+\alpha_1-\alpha_2}\right)^\frac{1}{\alpha}\in (0,1).$$
\begin{enumerate}
    \item In case 3. of Example \ref{ex_mix}, we have $h(x) = 1-\left(1-x^{\frac \gamma\alpha}\right)^a$, with $ a > 0$. Since $h(x)\underset {x\downarrow 0}\sim a x^{\frac \gamma\alpha}$, by Lemma \ref{Lemma6} and (\ref{mix_laplace}) we get  $\lambda_L(C_t) = \lambda_L(C) =\left ( \lambda_L(C_{\bar G_{\gamma,\gamma,\alpha_1,\alpha_2}})\right )^{\frac \gamma\alpha} =\left(\frac{1-\alpha_2}{1+\alpha_1-\alpha_2}\right)^\frac{\gamma}{\alpha ^2}$ for all $t\in [0,+\infty)$.
    \item In case 4. of Example \ref{ex_mix}, we have $h(x) = \frac{\log\left(1+\theta x^{\frac\gamma\alpha}\right)}{\log(1+\theta)}$, with $\theta \in (-1,0)$. Since
$h(x)\underset {x\downarrow 0}\sim \frac\theta{log(1+\theta)} x^{\frac \gamma\alpha}$, by Lemma \ref{Lemma6} and (\ref{mix_laplace}) we get 
 $\lambda_L(C_t) = \lambda_L(C) =\left ( \lambda_L(C_{\bar G_{\gamma,\gamma,\alpha_1,\alpha_2}})\right )^{\frac \gamma\alpha}  =\left(\frac{1-\alpha_2}{1+\alpha_1-\alpha_2}\right)^\frac{\gamma}{\alpha ^2}$ for all $t\in [0,+\infty)$.
\end{enumerate}
\end{example}

\vskip 0.3cm
Let us now analyze the upper tail dependence coefficient.
\begin{lemma}\label{Lemma 6.3} Let $h$ be differentiable in $(0,1)$ with $h^\prime$ continuous and $h^\prime(x)\in (0,+\infty)$.
If there exist $a,\beta>0$ such that $1-h(x)\underset{x\uparrow 1}\sim a(1-x)^\beta$, 
then $1-d_t(z)\underset {t\uparrow 1}\sim b(1-z)^{\frac 1\beta}$ for some $b>0$, implying that 
$\lambda_U(C_t)=2-\left(2- \lambda_U(C)\right )^{\frac 1\beta}=\lambda_U(C_{\bar G})$, for $t>0$.
\end{lemma}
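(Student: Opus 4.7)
The plan is to reduce the claim about $1-d_t(z)$ to a Taylor expansion of $h$ at the interior point $e^{-t}\in(0,1)$, invoking the hypothesis $1-h(x)\sim a(1-x)^\beta$ only to control $1-h^{-1}(z)$, and then quote the Durante et al.\ (2010) result recalled just above the lemma.

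First I would invert the boundary asymptotic. From $1-h(x)\sim a(1-x)^\beta$ as $x\uparrow 1$, writing $z=h(x)$ with $x\uparrow 1 \Leftrightarrow z\uparrow 1$, one obtains $1-z\sim a(1-x)^\beta$, hence
\[
1-h^{-1}(z)\ \underset{z\uparrow 1}{\sim}\ a^{-1/\beta}(1-z)^{1/\beta}.
\]
Next I would use the explicit form (\ref{d}) of $d_t$, namely $d_t(z)=h(e^{-t}h^{-1}(z))/h(e^{-t})$. Since $e^{-t}\in(0,1)$ and $h$ is differentiable there with $h'(e^{-t})\in(0,+\infty)$ continuous, the mean value theorem gives
\[
h(e^{-t})-h\bigl(e^{-t}h^{-1}(z)\bigr)= h'(\xi_z)\,e^{-t}\bigl(1-h^{-1}(z)\bigr)
\]
for some $\xi_z$ between $e^{-t}h^{-1}(z)$ and $e^{-t}$, and $\xi_z\to e^{-t}$ as $z\uparrow 1$, so $h'(\xi_z)\to h'(e^{-t})$ by continuity. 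Dividing by $h(e^{-t})$ and combining with the inverted asymptotic, I get
\[
1-d_t(z)\ \underset{z\uparrow 1}{\sim}\ \frac{h'(e^{-t})\,e^{-t}}{h(e^{-t})\,a^{1/\beta}}\,(1-z)^{1/\beta},
\]
which is the claimed power-type behaviour with exponent $1/\beta$ and an explicit constant $b>0$.

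Having the asymptotic $1-d_t(z)\sim b(1-z)^{1/\beta}$, I would invoke item 2 of the Durante et al.\ (2010) result recalled at the beginning of this subsection, applied to the distortion $\psi=d_t$ used in (\ref{t_cop}). This yields $\lambda_U(C_t)=2-(2-\lambda_U(C))^{1/\beta}$. For the last equality $\lambda_U(C_t)=\lambda_U(C_{\bar G})$, I would apply the same Durante et al.\ result, but this time to the alternative representation (\ref{c_cop}) with distortion $h_t(x)=h(e^{-t}x)/h(e^{-t})$. Since $e^{-t}\in(0,1)$ is in the interior where $h$ is differentiable, an identical mean-value argument gives $1-h_t(x)\sim (h'(e^{-t})e^{-t}/h(e^{-t}))(1-x)$, i.e.\ a power behaviour with exponent $1$, so Durante et al.\ yields $\lambda_U(C_t)=2-(2-\lambda_U(C_{\bar G}))^1=\lambda_U(C_{\bar G})$.

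The only slightly delicate point is justifying the mean-value expansion uniformly enough to pass to the limit; this is immediate from continuity of $h'$ on a compact neighborhood of $e^{-t}$ in $(0,1)$, so there is no real obstacle. The main conceptual step is simply recognizing that the boundary hypothesis on $h$ at $1$ controls the inversion $h^{-1}$ near $1$, while the behaviour of $d_t$ near $1$ is governed by $h$ at the \emph{interior} point $e^{-t}$, which is why the exponent $\beta$ gets inverted to $1/\beta$.
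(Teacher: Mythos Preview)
Your proof is correct and follows essentially the same route as the paper: the paper substitutes $x=h^{-1}(z)$ and computes $\lim_{x\to 1^-}\frac{1-h_t(x)}{(1-h(x))^{1/\beta}}$ via the derivative of $h_t$ at $1$, which is exactly your MVT step combined with your inversion $1-h^{-1}(z)\sim a^{-1/\beta}(1-z)^{1/\beta}$, yielding the same constant $b=\dfrac{e^{-t}h'(e^{-t})}{a^{1/\beta}h(e^{-t})}$. Your explicit invocation of Durante et al.\ for both the $d_t$ and $h_t$ representations to obtain the two $\lambda_U$ equalities is a welcome clarification that the paper leaves implicit.
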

\begin{proof}
The result immediately follows from
$$\begin{aligned}\underset{z\to 1^-}\lim \frac{1-d_t(z)}{(1-z)^{\frac 1\beta}}&=
\underset{x\to 1^-}\lim \frac{1-h_t(x)}{(1-h(x))^{\frac 1\beta}}=\frac 1{a^{\frac 1\beta}}\underset{x\to 1^-}\lim \frac{1-h_t(x)}{1-x}=\\
&=\frac{e^{-t}}{a^{\frac 1\beta} h\left (e^{-t}\right )}
\underset{x\to 1^-}\lim h^\prime\left(e^{-t}x\right )=\frac{e^{-t}h^\prime\left (e^{-t}\right )}{a^{\frac 1\beta} h\left(e^{-t}\right)}.\end{aligned}$$
\vskip 0.2cm
\end{proof}


\begin{example}

Let us consider the survival distribution (\ref{MU_LMP}). Using (\ref{C_alpha}), if we assume $\alpha_1 \geq \alpha_2$ then, 
\begin{equation*}
\begin{aligned}
\lambda_U(C_{\bar G_{\alpha,\gamma,\alpha_1,\alpha_2}})&= \underset{u \to 1} \lim \frac{1-2u+\bar{C}_{\bar G_\alpha}(u,u)}{1-u} = \underset{u \to 1} \lim \frac{1-2u+\left(u^{-\alpha}\left(1+\frac{\alpha_1}{1-\alpha_2}\right)-\frac{\alpha_1}{1-\alpha_2}\right)^{-\frac{1}{\alpha}}}{1-u} =\\
&= \frac{1-(\alpha_1+\alpha_2)}{1-\alpha_2}
\end{aligned}
\end{equation*}
\begin{enumerate}
    \item In case 3. of Example \ref{ex_mix}, we have $1-h(x) = \left(1-x^{\frac \gamma\alpha}\right)^a$, with $ a > 0$. Since $1-h(x)\underset {x\uparrow 1}\sim \left (\frac \gamma\alpha\right )^a(1-x)^a$, by Lemma \ref{Lemma 6.3} and (\ref{mix_laplace}) we get
$\lambda_U(C) = 2-\left (\frac{1+\alpha_1-\alpha_2}{1-\alpha_2}\right )^a$ and $\lambda_U(C_t) = \frac{1-(\alpha_1+\alpha_2)}{1-\alpha_2}$ for all $t\in (0,+\infty)$.
\item In case 4. of Example \ref{ex_mix}, we have $h(x) = \frac{\log(1+\theta x^{\frac\gamma\alpha})}{\log(1+\theta)}$, with $\theta \in (-1,0)$.
Since $1-h(x)\underset {x\uparrow 1}\sim \frac{\theta\gamma}{\alpha(1+\theta)\log(1+\theta)}(1-x)$, by Lemma \ref{Lemma 6.3} and (\ref{mix_laplace}) we get $\lambda_U(C_t) = \frac{1-(\alpha_1+\alpha_2)}{1-\alpha_2}$ for all $t\geq 0$.
\end{enumerate}
\end{example}
\section{Conclusions}\label{conclusion}
In this note, we have generalized the functional equations that characterize the lack-of-memory properties in survival analysis (at the univariate as well as at the bivariate levels) by assuming that the survival distribution of the residual lifetimes is given by a time dependent distortion of that of the original lifetimes: these equations represent a generalization of the univariate functional equation introduced by Kaminsky (1983) and of the bivariate strong and weak versions studied by Marshall and Olkin (2015).  After determining the conditions under which they have solutions, we show that they are equivalent to those studied in Ricci (2024). \\
Since the univariate case turns out to be trivial and the distributions that satisfy the generalized strong bivariate equation have already been studied in the literature, we have focused our analysis on the generalized weak bivariate case, where residual lifetimes are conditioned on survival beyond a common threshold. \\
Through a mixing approach, we have generated new classes of bivariate survival distributions starting from a family of distributions that have been studied in Mulinacci (2018) and assuming a positive mixing variable whose moment generating function is known in closed form. \\
Since the functional equation is characterized by the given time dependent distortion that links the survival distribution of the residual lifetimes to the original one, we have analyzed its impact on the bivariate aging properties and on the dependence structure of the residual lifetimes. In particular, we have shown, through many examples, that the choice of the distortion (that turns out to depend on the distribution of the minimum of the two involved lifetimes) allows to build bivariate distributions that may exhibit bivariate increasing (decreasing) failure rates or New Better (Worse) than Used properties. By analyzing the time-dependent Kendall’s function and the tail dependence coefficients, we have shown how the strength and nature of dependence can vary—either intensifying or weakening—as the conditioning time increases: in particular, we have proved that upper tail dependence coefficient may have a discontinuity at time $0$ for a specific choice of the generator.\\
Finally, we have discussed possible applications to joint tail risk management and insurance pricing. As for the latter, our simulations on the pricing of joint survivor annuities demonstrate that, accounting for dynamic and possibly asymmetric dependence, yields substantial differences in annuity values compared to the standard assumption of independence: depending on whether dependence is positive or negative, and on whether it increases or decreases over time,  the premium can be significantly higher or lower. These findings reinforce the practical relevance of incorporating more realistic dependence structures in actuarial models. \\
Future work may extend this framework to higher dimensions or explore estimation procedures based on real-world data.


\begin{thebibliography}{99}
\bibitem{BFL} M. Denuit, E. Frostig and B. Levikson. "Shift in interest rate and common shock model
for coupled lives". In: Belgian Actuarial Bulletin (2006) , Volume 6, pp. 1–4.
\bibitem{D} F. Durante, R. Foschi, P. Sarkoci. "Distorted Copulas: Constructions and Tail Dependence". In : Communications in Statistics
- Theory and Methods (2010), Volume 39 (12), pp. 2288-2301.
\bibitem{GK} C. Genest, N. Kolev. " A law of uniform seniority for dependent lives". In: 
  Scandinavian Actuarial Journal (2021), Volume 2021, Issue 8,  pp. 726-743.
\bibitem{GL} C. Gourieroux, Y. Lu. " Love and death: A Freund model with frailty". In: 
Insurance: Mathematics and Economics (2015), Volume 63, pp. 191-203.
\bibitem{J} H. Joe. "Dependence modeling with copulas". CRC press. (2014).
\bibitem{K} K.S. Kaminsky. "An aging property of the Gompertz survival function and a discrete analog". Tech. Rep., Department of Mathematical Statistics,
University of Umeå, Sweden, 1983.
\bibitem{Kl} E.P. Klement, R. Mesiar, E. Pap. "Triangular norms. Position paper III: continuous t-norms"
In: Fuzzy Sets and Systems (2004), Volume 145(3), pp. 439-454.
\bibitem{Kol} N. Kolev. "Characterizations of the class of bivariate Gompertz distributions"
In: Journal of Multivariate Analysis (2016), Volume 148, pp. 173-179.
\bibitem{MO} A.Marshall I.Olkin. "A multivariate exponential distribution". In: Journal of the American
Statistical Association (1967), Volume 62, pp. 30–44.
\bibitem{MO2} A.W.Marshall, I.Olkin. "A Bivariate Gompertz-Makeham Life Distribution". In: Journal of Multivariate Analysis (2015), Volume 139, pp. 219-226.
\bibitem{MO3} A.W.Marshall, I.Olkin. "Life Distributions". Springer Series in Statistics (2007).
\bibitem{MS} P.Muliere, M.Scarsini. "Characterization of a Marshall-Olkin Type Class of Distributions". In: Annals of the Institute of Statistical Mathematics (1987), Volume 39, pp. 429-441.
\bibitem{SM} S.Mulinacci. "Archimedean-based Marshall-Olkin Distributions and Related Dependence Structures". In: Methodology and Computing in Applied Probability (2018), Volume 20, pp. 205-236. 
\bibitem{NQRU} R.B. Nelsen, J.J. Quesada-Molina, J.A. Rodríguez-Lallena, M. Úbeda-Flores: "Kendall distribution functions". In:
Stat Probab Lett (2003), Volume 65(3), pp. 263–268.
\bibitem{N} R. B. Nelsen. "An Introduction to Copulas". Springer, 2nd Edn (2006). 
\bibitem{R} B. L. S. P. Rao. "On a Bivariate lack-of-memory Property Under Binary Associative Operation". In: COMMUNICATIONS IN STATISTICS, Theory and Methods (2004), Volume 33, Number 12, pp. 3103–3114.
\bibitem{RM} M. Ricci. "A Generalization of Bivariate Lack-of-Memory Properties". In: https://arxiv.org/abs/2401.11457 (2024). 
\end{thebibliography}
\end{document}